\newtheorem{thm}{Theorem}[section]
\newtheorem{lem}{Lemma}[section]
\newtheorem{cor}{Corollary}[section]
\newtheorem{prop}{Proposition}[section]
\newtheorem{rem}{Remark}[section]
\begin{document}

 \title[Anti-self-dual connections over $5$D Heisenberg group and the twistor method]{
Anti-self-dual connections over the $5$D Heisenberg group and the twistor method}

\begin{abstract}
In this paper, we introduce   notions of   $\alpha$-planes in  $5$D complex Heisenberg group and the twistor space as the moduli space of all $\alpha$-planes. So we can define an anti-self-dual (ASD) connection as a connection flat over all $\alpha$-planes. This geometric approach allows us to establish Penrose-Ward correspondence between ASD connections over $5$D complex Heisenberg group and a class of holomorphic vector bundles on the twistor space. By Atiyah-Ward ans\"{a}tz, we also construct a family of ASD connections on  $5$D complex Heisenberg group. When restricted to $5$D real Heisenberg group, the flat model of $5$D contact manifolds, an ASD connection satisfies the horizontal part of the contact instanton equation  introduced by physicists.
\end{abstract}
\author{Guangzhen\ Ren}

\address{Department of Mathematics, Zhejiang International Studies University, Hangzhou 310023, PR China}

\email [G.-Z.\ Ren]{gzren@zisu.edu.cn}

\author{Wei\ Wang}

\address{Department of Mathematics, Zhejiang University, Zhejiang 310027, PR China}

\email[W. Wang]{wwang@zju.edu.cn}

\thanks{The first author is partially supported by National Nature Science Foundation in China (No.
11801508, 11801523, 11971425); The second author is partially supported by National Nature Science Foundation in China (No.
11971425)}
\keywords{the twistor method; $5$D Heisenberg group; anti-self-dual   connection; the contact instanton equation;   Penrose-Ward   correspondence;   Atiyah-Ward ans\"{a}tz; ASD Yang-Mills equation}

 \maketitle

\section{Introduction}
Since $3$D Chern-Simons theory is very successful, people are interested in its generalization to higher odd dimension. When studying $5$D Chern-Simons theory, physicists
introduced the supersymmetric Yang-Mills equation on $5$D contact manifolds (c.f. \cite{MR2967134} \cite{MR3042942}). In particular, they introduced the {\it contact instanton equation} on $5$D contact manifolds as
\begin{equation}
\mathcal{F}=\pm\iota_{T}\ast\mathcal{F},
\end{equation}
where $\mathcal{F}$ is the curvature for a connection, $T$ is the Reeb vector field and $\iota_{T}$ is the contraction with $T$. Hosomichi-Seong-Terashima \cite{MR2967134} constructed a $5$D $\mathcal{N}=1$ supersymmetric Yang-Mills theory on the five sphere, and   showed   the fields in a vector multiplet localized to contact instantons by using the localization technique.
 K\"{a}ll\'{e}n-Zabzine \cite{MR3042942} localized the path integral of Chern-Simons theory on    circle fibrations over   $4$D sympletic manifolds to contact instantons. On the other hand,
Itoh \cite{Itoh2002Contact} has constructed the CR twistor space over $5$D contact manifolds by using differential geometry about two decades ago.   Wolf \cite{Wolf1} used the twistor method to study the contact instanton equation on contact manifolds.

In the classical flat case, by complexifying $4$D Minkowski space as $\mathbb{C}^{4}$, ones can construct twistor space by using complex geometry method (cf. \cite{MR2583958} \cite{Mason1} \cite{MR1054377}). If we denote a point of $\mathbb{C}^{4}$ by
 $$\mathbf{y}=\left(\begin{array}{lll}
y_{00'}  &y_{01'}\\
y_{10'} &y_{11'}
\end{array}\right),$$
 an {\it $\alpha$-plane} in $\mathbb{C}^{4}$ is the set of all $\mathbf{y}$ satisfying
\begin{equation} \left(\begin{array}{lll}
y_{00'}  &y_{01'}\\
y_{10'} &y_{11'}
\end{array}\right)
 \left(\begin{array}{ll}
\pi_{0'}\\
\pi_{1'}
\end{array}\right)=
 \left(\begin{array}{ll}
\omega_{0}\\
\omega_{1}
\end{array}\right)
  \end{equation}
for fixed $0\neq(\pi_{0'},\pi_{1'})\in\mathbb{C}^{2}$ and $w_{0},$ $w_{1}$ $\in\mathbb{C}$.
The moduli space of all
$\alpha$-planes is the twistor space $\mathcal{P}_{0}$, which is an open subset of $\mathbb{C}P^3$. Then we have the double fibration
\begin{equation*}        \xy 0;/r.22pc/:
 (0,20)*+{\mathbb{C}^{4}\times \mathbb{C}P^1}="1";
(-20,0)*+{  \mathcal{P}_{0} }="2";
(20,0)*+{\mathbb{C}^{4}}="3";
{\ar@{->}_{\eta}"1"; "2"};
{\ar@{->}^{\tau} "1"; "3"};
\endxy,
 \end{equation*}
based on which there exists the Penrose correspondence between the solution of massless field equations and the first cohomology groups of certain line bundles over $\mathcal{P}_{0}$, and Penrose-Ward correspondence between the solutions of ASD Yang-Mills equation and holomorphic bundles over $\mathcal{P}_{0}$, trivial over the complex projective line $\hat{x}=\eta\circ\tau^{-1}(x)$ for any $x\in\mathbb{C}^{4}.$ In this paper, we consider the simplest $5$D contact manifold, $5$D real Heisenberg group, and complexify it as $\mathbb{C}^{5}$. We will generalize this theory to   $5$D Heisenberg group.

$5$D complex {\it  Heisenberg  group}   $\mathscr H$  is
$\mathbb{C}^{5}:=\{(\mathbf{y},t)|\mathbf{y}\in\mathbb{C}^{4},t\in\mathbb{C}\}$ with the multiplication given by
\begin{equation} \label{eq:w-multiplication} \begin{split}&(\mathbf{y},t ) \circ
(\mathbf{y'},t')=\left ( \mathbf{y}+\mathbf{y'},t+t' +
B(\mathbf{ y}, \mathbf{y'})\right),
\end{split}\end{equation}
where $B(\mathbf{y}, \mathbf{y'})=y_{00'}y'_{11'}-y_{01'}y'_{10'}+y_{10'}y'_{01'}
-y_{11'}y'_{00'}.$ We have left invariant vector fields on $ \mathscr H$:
\begin{equation} \label{eq:Y}\begin{split}
V_{00'}&:=\frac{\partial}{\partial y_{00'}}- {y}_{11'} T,
\qquad V_{01'}:=\frac{\partial}{\partial y_{01'}}+ {y}_{10'} T,\\
V_{10'}&:=\frac{\partial}{\partial y_{10'}}- {y}_{01'} T,\qquad
V_{11'}:=\frac{\partial}{\partial y_{11'}}+{y}_{00'} T,\qquad
T:=\frac{\partial}{\partial t}.
\end{split}
\end{equation}
 It is easy to see that
\begin{equation} \label{eq:YYT}
[V_{00'},V_{11'} ]=[V_{10'},V_{01'} ]=2T,
\end{equation}
and all other brackets vanish. Consequently, for fixed $0\neq(\pi_{0'},\pi_{1'})\in\mathbb{C}^{2}$, if denote
\begin{equation}\label{keyvf}V_{A}:=\pi_{0'}V_{A0'}-\pi_{1'}V_{A1'},\quad A=0,1,\end{equation}
we have $$[V_{0},V_{1}]=0.$$ Namely, $span\{V_{0},V_{1}\}$ is an abelian Lie subalgebra and an integrable distribution for fixed $0\neq(\pi_{0'},\pi_{1'})\in\mathbb{C}^{2}$.
Their integral surfaces are hyperplanes (cf. (\ref{eq:lienar eq0})), which we also call {\it $\alpha$-planes}. The {\it twistor space} $\mathcal{P}$ is the moduli space of all $\alpha$-planes, which is a $4$D complex manifold.
We have the double fibration over $5$D complex Heisenberg group as follows
\begin{equation} \label{TOH}       \xy 0;/r.22pc/:
 (0,20)*+{\mathcal{F}=\mathbb{C}^{5}\times \mathbb{C}P^1 }="1";
(-20,0)*+{  \mathcal{P} }="2";
(20,0)*+{\mathscr H\cong \mathbb{C}^{5}}="3";
{\ar@{->}_{\eta}"1"; "2"};
{\ar@{->}^{\tau} "1"; "3"};
\endxy.
 \end{equation}

A connection is called {\it anti-self-dual} (briefly ASD) if it is flat over any $\alpha$-plane.
Let $\Phi=\Phi_{00'}\theta^{00'}+\Phi_{10'}\theta^{10'}+\Phi_{01'}\theta^{01'}+\Phi_{11'}\theta^{11'}+\Phi_{T}\theta$ be a
$\mathfrak g$-valued connection form on $\mathscr H$, where $\{\theta^{AA'},\theta\}$ are $1$-forms dual to $\{V_{AA'},T\}$.
$\Phi$ is ASD if and only if it satisfies the {\it ASD Yang-Mills equation}
\begin{equation}\label{eq:ASD}
     \left\{\begin{array}{l}
V_{00'}(\Phi_{10'})-V_{10'}(\Phi_{00'})+[\Phi_{00'},\Phi_{10'}]=0,\\
V_{01'}(\Phi_{10'})+V_{00'}(\Phi_{11'})-V_{10'}(\Phi_{01'})-V_{11'}(\Phi_{00'})+[\Phi_{00'},\Phi_{11'}]+[\Phi_{01'},\Phi_{10'}]=0,\\
V_{01'}(\Phi_{11'})-V_{11'}(\Phi_{01'})+[\Phi_{01'},\Phi_{11'}]=0.
         \end{array}   \right.
  \end{equation}

An open subset $U$ of $\mathscr H$   is called {\it elementary} if for every $\alpha$-plane $\widetilde{Z}=\tau\circ\eta^{-1}(Z)$ ($Z\in \mathcal{P} $)  the intersection $\widetilde{Z}\cap U$ is connected and simply connected. Then we have Penrose-Ward  correspondence.
\begin{thm}\label{pwc} Let $U$ be an elementary open set in $\mathscr H$. There is a one-to-one correspondence between\\
 (1) gauge equivalence classes of $ASD$ connections with the gauge group ${\rm GL}(n,\mathbb{C})$ over $U$; \\
 (2) holomorphic
vector bundles $E'\rightarrow \hat{U}=\eta\circ\tau^{-1}(U)$ such that $E'|_{\hat x}$ is trivial, where $\hat{x}:=\eta\circ\tau^{-1}(x)$ for each $x\in U$.
\end{thm}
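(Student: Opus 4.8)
The plan is to establish the correspondence in both directions and then verify that the two constructions are mutually inverse and descend to gauge-equivalence classes. Throughout I work on the correspondence space $\mathcal{F}=\mathbb{C}^{5}\times\mathbb{C}P^{1}$ of \eqref{TOH}, with its projections $\tau$ (to $\mathscr{H}$) and $\eta$ (to $\mathcal{P}$). The essential geometric input is that the fibres of $\eta$ are exactly the $\alpha$-planes: over a fixed $[\pi_{0'}:\pi_{1'}]$ the tangent distribution of an $\eta$-fibre is spanned by the lifts of $V_{0},V_{1}$ from \eqref{keyvf}, and the relation $[V_{0},V_{1}]=0$ makes this distribution integrable. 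The fibres of $\tau$ are the lines $\{x\}\times\mathbb{C}P^{1}$, which $\eta$ maps isomorphically onto $\hat{x}$.

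\textbf{Forward direction} $(1)\Rightarrow(2)$. Given an ASD connection $(E,\nabla)$ over $U$, pull it back to $\tau^{*}E$ on $\tau^{-1}(U)$ with connection $\tau^{*}\nabla$. Since $\nabla$ is flat on every $\alpha$-plane, the curvature of $\tau^{*}\nabla$ vanishes on $\ker d\eta=\mathrm{span}\{V_{0},V_{1}\}$; because $U$ is elementary, each $\eta$-fibre meets $\tau^{-1}(U)$ in a connected, simply connected set, so parallel transport along $\eta$-fibres is path-independent and the space of covariantly constant sections over $\eta^{-1}(Z)$ has constant rank $n$. These spaces assemble into a rank-$n$ bundle $E'\to\hat{U}$, the $\eta$-pushforward of the sheaf of $\eta$-fibrewise covariantly constant sections of $\tau^{*}E$. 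To put the holomorphic structure on $E'$, note that local covariantly constant frames have transition matrices annihilated by $V_{0}$ and $V_{1}$; since these are the vertical directions of the holomorphic submersion $\eta$, the transition matrices descend to holomorphic matrix functions on $\mathcal{P}$, defining the holomorphic structure of $E'$. Finally, restricting to a $\tau$-fibre $\{x\}\times\mathbb{C}P^{1}$, the bundle $\tau^{*}E$ is the constant bundle $E_{x}$, so $E'|_{\hat{x}}$ is trivial.

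\textbf{Backward direction} $(2)\Rightarrow(1)$. Given a holomorphic $E'\to\hat{U}$ trivial on every $\hat{x}$, set $E:=\tau_{*}\eta^{*}E'$, whose fibre is $E_{x}=H^{0}(\hat{x},E'|_{\hat{x}})$. Triviality gives $E'|_{\hat{x}}\cong\mathcal{O}^{\oplus n}$, hence $H^{0}=\mathbb{C}^{n}$ and $H^{1}=0$ on every fibre; by Grauert's direct-image theorem $E$ is a holomorphic rank-$n$ bundle over $U$. To build the connection, note that $\eta^{*}E'$ carries a tautological flat partial connection along the $\eta$-fibres, so in a local frame the operators annihilating pulled-back sections have the form $V_{0}+a_{0}$, $V_{1}+a_{1}$ with $a_{A}$ holomorphic and homogeneous of degree one in $(\pi_{0'},\pi_{1'})$. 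Using the triviality on lines to normalise the frame along each $\hat{x}$, a global-sections (Liouville-type) argument on $\mathbb{C}P^{1}$ forces $a_{A}=\pi_{0'}\Phi_{A0'}-\pi_{1'}\Phi_{A1'}$ for matrices $\Phi_{AA'}$ depending only on $x$; these define $\nabla$. The integrability $[V_{0}+a_{0},V_{1}+a_{1}]=0$, expanded as a polynomial in $(\pi_{0'},\pi_{1'})$ and read off coefficient by coefficient, is precisely the ASD Yang--Mills system \eqref{eq:ASD}; note that by \eqref{eq:YYT} the Reeb component $\Phi_{T}$ cancels out of these equations, consistent with the $\alpha$-planes seeing only the horizontal connection.

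\textbf{Mutual inverseness and the main obstacle.} That the two constructions invert one another is then formal: starting from $(E,\nabla)$, reconstructing $E'$ and pushing down recovers $E_{x}=H^{0}(\hat{x},E'|_{\hat{x}})$ canonically, and the extracted $\Phi_{AA'}$ agree with the original components; a change of covariantly constant frame is a gauge transformation, so the correspondence is well-defined on gauge-equivalence classes. I expect the main obstacle to be twofold. First, in the forward direction, proving that $E'$ is genuinely holomorphic requires the $\eta$-fibrewise covariantly constant frames to be chosen depending holomorphically on the transverse ($\mathcal{P}$) directions, which must be checked from the explicit vector fields \eqref{eq:Y}. Second, in the backward direction, the delicate point is the homogeneity/Liouville step that pins $a_{A}$ down to a linear form in $\pi$; this is what converts the abstract relative connection into honest connection components $\Phi_{AA'}$, and it is precisely where the triviality-on-lines hypothesis is indispensable.
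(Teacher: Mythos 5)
Your proposal is correct and follows essentially the same route as the paper: fibres of $E'$ as covariantly constant sections over $\alpha$-planes (using the elementary hypothesis for solvability), triviality on $\hat{x}$ from parallel fields, and in the reverse direction a flat partial connection along the $\eta$-fibres pinned down to $-\Phi_{A1'}+\zeta\Phi_{A0'}$ by a Liouville argument, with integrability yielding \eqref{eq:ASD}. The only cosmetic difference is that the paper realizes your "normalisation of the frame along each $\hat{x}$" concretely via a Birkhoff factorization $F=\widetilde{f}^{-1}f$ of the transition matrix, where you invoke Grauert and a global-sections argument.
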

Define the {\it Sub-Laplacian} $$\Delta_{b}:=V_{00'}V_{11'}-V_{10'}V_{01'},$$
and {\it partial exterior differential operators} ${\rm d}_{0}$ and ${\rm d}_{1}$ by
\begin{equation}\label{d-1-2}
{\rm d}_{0}f:=V_{00'}f\cdot\theta^{00'}+V_{10'}f\cdot\theta^{10'},\qquad
{\rm d}_{1}f:=V_{01'}f\cdot\theta^{01'}+V_{11'}f\cdot\theta^{11'}
\end{equation}
for a function $f\in C^{\infty}(U,\mathbb{C})$.
By using   Atiyah-Ward ans\"{a}tz, we can construct a family of ASD connections.
\begin{thm}\label{ASDA}
(1) If $\varphi$ satisfies
\begin{equation}\label{eq:compatible-}
\Delta_{b}\varphi=0,
\end{equation}
then the connection form
 \begin{equation}\label{AASSDD11}
     \Phi
     =\left[\begin{array} {cc}
     \frac{1}{2}({\rm d}_{0}\ln{\varphi}-{\rm d}_{1}\ln{\varphi})
     & V_{01'}(\ln{\varphi})\theta^{00'}+V_{11'}(\ln\varphi)\theta^{10'}\\
     V_{00'}(\ln{\varphi})\theta^{01'}+V_{10'}(\ln\varphi)\theta^{11'}
     &\  -\frac{1}{2}({\rm d}_{0}\ln{\varphi}-{\rm d}_{1} \ln{\varphi})\end{array}\right]+\Phi_{T}\theta
  \end{equation}
is ASD.

(2) In particular,
\begin{equation}
\varphi:=\frac{1}{\left\|\mathbf{ y}\right\|^{4}-t^{2}},\qquad {\it where}\quad  \left\|\mathbf{ y}\right\|^{2}=\rm{det}\left[\begin{array}{lll}
y_{00'}  &y_{01'}\\
y_{10'} &y_{11'}
\end{array}\right],
\end{equation}
is the solution to (\ref{eq:compatible-}) on $\mathscr H\setminus\{\left\|\mathbf{ y}\right\|^{4}=t^{2}\}$.
\end{thm}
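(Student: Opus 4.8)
The plan is to prove both parts by direct verification: part (1) against the ASD Yang--Mills system \eqref{eq:ASD}, and part (2) by differentiating the explicit function. For part (1) I would first read off the four horizontal components $\Phi_{AA'}$ as the matrix coefficients of $\theta^{AA'}$ in \eqref{AASSDD11}. Writing $\psi:=\ln\varphi$ for brevity and expanding ${\rm d}_0\psi$, ${\rm d}_1\psi$ via \eqref{d-1-2}, this gives the traceless blocks
\[
\Phi_{00'}=\begin{pmatrix} \tfrac12 V_{00'}\psi & V_{01'}\psi \\ 0 & -\tfrac12 V_{00'}\psi\end{pmatrix},\qquad
\Phi_{10'}=\begin{pmatrix} \tfrac12 V_{10'}\psi & V_{11'}\psi \\ 0 & -\tfrac12 V_{10'}\psi\end{pmatrix},
\]
\[
\Phi_{01'}=\begin{pmatrix} -\tfrac12 V_{01'}\psi & 0 \\ V_{00'}\psi & \tfrac12 V_{01'}\psi\end{pmatrix},\qquad
\Phi_{11'}=\begin{pmatrix} -\tfrac12 V_{11'}\psi & 0 \\ V_{10'}\psi & \tfrac12 V_{11'}\psi\end{pmatrix},
\]
with the off-diagonal blocks strictly upper- (resp.\ lower-) triangular. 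Since \eqref{eq:ASD} involves only the $\Phi_{AA'}$ and never $\Phi_T$, the component $\Phi_T$ is unconstrained and may be chosen freely, so it suffices to check the three equations on these four matrices.

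The verification rests on two inputs. First, the commutation relations \eqref{eq:YYT}: because $V_{00'},V_{10'}$ commute and $V_{01'},V_{11'}$ commute, the diagonal of each derivative part vanishes, while the single surviving off-diagonal entry (for the first and third equations) or diagonal entry (for the middle equation, after the two $2T$-terms from substituting $V_{01'}V_{10'}=V_{10'}V_{01'}-2T$ and $V_{11'}V_{00'}=V_{00'}V_{11'}-2T$ cancel) equals $\pm\Delta_b\psi$. Second, the $2\times2$ matrix commutators: the triangular structure makes $[\Phi_{00'},\Phi_{10'}]$ and $[\Phi_{01'},\Phi_{11'}]$ purely off-diagonal and $[\Phi_{00'},\Phi_{11'}]+[\Phi_{01'},\Phi_{10'}]$ purely diagonal, each with entries $\pm(V_{00'}\psi\,V_{11'}\psi-V_{10'}\psi\,V_{01'}\psi)$. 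The identity tying the two together is
\[
\Delta_b\varphi=\varphi\bigl(\Delta_b\psi+V_{00'}\psi\,V_{11'}\psi-V_{10'}\psi\,V_{01'}\psi\bigr),
\]
which follows from $V_{AA'}\varphi=\varphi\,V_{AA'}\psi$ and the product rule. In each of the three equations the derivative and commutator contributions land in the same matrix slot and sum to $\pm\varphi^{-1}\Delta_b\varphi$, which vanishes precisely under the hypothesis $\Delta_b\varphi=0$. I expect the only real obstacle to be organizational: in the middle equation one must commute $V_{11'}V_{00'}$ and $V_{01'}V_{10'}$ carefully to exhibit the $2T$-cancellation and the resulting $\Delta_b\psi$, and keep the signs of the four commutator blocks straight.

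For part (2) I would verify $\Delta_b\varphi=0$ by direct computation. Setting $\rho:=\|\mathbf{y}\|^2=y_{00'}y_{11'}-y_{01'}y_{10'}$ and $f:=\rho^2-t^2$, so that $\varphi=f^{-1}$, the first derivatives come out in the clean factored form
\[
V_{00'}f=2y_{11'}(\rho+t),\quad V_{11'}f=2y_{00'}(\rho-t),\quad V_{01'}f=-2y_{10'}(\rho+t),\quad V_{10'}f=-2y_{01'}(\rho-t),
\]
using $\partial_t(\rho^2-t^2)=-2t$ together with the derivatives of $\rho$ and the explicit form \eqref{eq:Y} of the $V_{AA'}$. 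Applying $V_{AA'}(f^{-1})=-f^{-2}V_{AA'}f$ and differentiating once more, the cubic terms carry a factor $(\rho+t)(\rho-t)=f$ that cancels one power of $f^{-1}$, and collecting the remainder gives
\[
V_{00'}V_{11'}\varphi=f^{-2}\bigl(4y_{00'}y_{11'}-2(\rho-t)\bigr),\qquad V_{10'}V_{01'}\varphi=f^{-2}\bigl(4y_{01'}y_{10'}+2(\rho+t)\bigr).
\]
Their difference is $f^{-2}\bigl(4(y_{00'}y_{11'}-y_{01'}y_{10'})-4\rho\bigr)=f^{-2}(4\rho-4\rho)=0$, so $\varphi$ solves \eqref{eq:compatible-} on $\mathscr H\setminus\{\|\mathbf{y}\|^4=t^2\}$; feeding it into \eqref{AASSDD11} then produces the advertised explicit ASD connection by part (1). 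The only delicate point here is arranging the second-order computation so that the numerator factor $f$ cancels cleanly; the rest is mechanical.
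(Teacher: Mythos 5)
Your proposal is correct, but for part (1) it takes a genuinely different route from the paper. The paper never verifies the system \eqref{eq:ASD} directly: it runs the Atiyah--Ward machinery to its conclusion, constructing an explicit Birkhoff factorization $G=\widetilde f^{-1}f$ of the transition matrix $\left(\begin{smallmatrix}\zeta&\gamma\\0&\zeta^{-1}\end{smallmatrix}\right)$ (this is where the hypothesis $\Delta_b\varphi=0$ enters, via the lemma that it makes the recursion \eqref{eq:induct} for the Laurent coefficients of $\gamma$ solvable, so that $\gamma$ is constant along the fibers of $\eta$), then reading off $\Phi=-{\rm d}_1 h\cdot h^{-1}-{\rm d}_0\widetilde h\cdot\widetilde h^{-1}+\Phi_T\theta$ from $h=f|_{\zeta=0}$, $\widetilde h=\widetilde f|_{\widetilde\zeta=0}$, and invoking formula \eqref{eq:asd-eq4} and Theorem \ref{pwc} to conclude ASD. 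Your direct verification is more elementary and self-contained: the block decomposition of the $\Phi_{AA'}$, the observation that $\Phi_T$ drops out of \eqref{eq:ASD} (because $\Phi_{[V_{00'},V_{11'}]}+\Phi_{[V_{01'},V_{10'}]}=0$), and the logarithmic identity $\Delta_b\varphi=\varphi\bigl(\Delta_b\psi+V_{00'}\psi\,V_{11'}\psi-V_{10'}\psi\,V_{01'}\psi\bigr)$ are all correct, and I have checked that the three equations do reduce to $\pm\varphi^{-1}\Delta_b\varphi=0$ in the slots you indicate (note only that the $2T$-cancellation you flag for the middle equation is also needed in the third equation, where the surviving entry is $V_{01'}V_{10'}\psi-V_{11'}V_{00'}\psi=-\Delta_b\psi$). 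What you lose relative to the paper is the explanation of where formula \eqref{AASSDD11} comes from -- the whole point of the ans\"atz is that the connection is manufactured from a holomorphic bundle on the twistor space -- and the reusable factorization data $f,\widetilde f$; what you gain is independence from Theorem \ref{pwc} and the Birkhoff factorization. Your part (2) is essentially identical to the paper's computation.
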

Now we consider $5$D real Heisenberg group $\mathscr H^{\mathbb{R}}\cong \mathbb{R}^{5}$ with multiplication given by
\begin{equation} \label{eq:w-multiplication-r} \begin{split}
&(\mathbf{y},s) \circ(\mathbf{y}', s' )
=\left ( \mathbf{y}+\mathbf{y}', s+s' +\langle\mathbf{ y} , \mathbf{y}'\rangle\right),
\end{split}\end{equation}
where $\langle\mathbf{ y} , \mathbf{y}'\rangle=2\left(y_{1}y_{2}'-y_{2}y_{1}'-y_{3}y_{4}'+y_{4}y_{3}'\right)$, $\mathbf{ y} , \mathbf{y}'\in \mathbb{R}^{4}$ and $s, s'\in \mathbb{R}$.
By the real imbedding $\mathbb{R}^{5}\longrightarrow\mathbb{C}^{5}$ given by
\begin{equation}\label{eq:C-L}
\left[\begin{array}{lll}
y_{00'}&y_{01'}\\
y_{10'}&y_{11'}
\end{array}\right]:=\left[\begin{array}{lll}
y_{1}+\textbf{i}y_{2}& -y_{3}+\textbf{i}y_{4}\\
y_{3}+\textbf{i}y_{4}& \ \ y_{1}-\textbf{i}y_{2}
 \end{array}\right],\quad t=-\textbf{i}s,
\end{equation}
$\mathscr H^{\mathbb{R}}$ is a subgroup of $\mathscr H$.
It is a flat model of 5D contact manifolds.

Recall the contact instanton equation on $5$D contact manifolds \cite{MR2967134} \cite{MR3042942}. For a connection $\nabla$, let us consider its Yang-Mills action $$YM(\nabla)=-\int_{\mathscr H^{\mathbb{R}}}Tr(F\wedge\ast F),$$
where $F$ is the curvature of $\nabla$ and $\ast$ is the Hodge star over $\mathscr H^{\mathbb{R}}$. $F_{H}$ and $F_{V}$ are horizontal and vertical part of $F$ respectively, which satisfy  $\iota_{T}F_{H}=0$ and $\iota_{T}F_{V}\neq0$ respectively.
 As $F_{H}\wedge\ast F_{V}=0$ and $F_{V}\wedge\ast F_{H}=0$, we have
$$YM(\nabla)=-\int_{\mathscr H^{\mathbb{R}}}Tr(F_{H}\wedge\ast F_{H}+F_{V}\wedge\ast F_{V}).$$
Take $F^{+}_{H}$ and $F^{-}_{H}$ as horizontal self-dual and   anti-self-dual parts of $F$, which satisfy
$\iota_{T}\ast F^{+}_{H}=F^{+}_{H}$ and $\iota_{T}\ast F^{-}_{H}=-F^{-}_{H}$ respectively.
  $F^{+}_{H}=0$ or $F^{-}_{H}=0$ together with $F_{V}=0$ are critical points of $YM(\nabla)$.
The anti-self-dual contact instanton equation  \cite{MR2967134} \cite{MR3042942} is
\begin{equation}\label{instan111}F^{+}_{H}=0\quad and \quad F_{V}=0,\end{equation}
while the self-dual one is
$$F^{-}_{H}=0\quad and \quad F_{V}=0.$$
The ASD equations (\ref{eq:ASD}) restricted to $\mathscr H^{\mathbb{R}}$ is exactly $F^{+}_{H}=0$. So it satisfies (\ref{instan111}) without $F_{V}=0.$
If $\varphi$ in (\ref{AASSDD11}) is replaced by
\begin{equation*}
\varphi^{\mathbb{R}}=\frac{1}{|y|^{4}+s^{2}},\quad where \ |y|=(y_{1}^{2}+y_{2}^{2}+y_{3}^{2}+y_{4}^{2})^{\frac{1}{2}},
\end{equation*}
we get ASD connection forms on $5$D real Heisenberg group $\mathscr H^{\mathbb{R}}$.

Baston and Easteood \cite{BE} generalize the twistor theory to a general setting based an the double fibration
\begin{equation}  \label{eq:G/P}      \xy 0;/r.22pc/:
 (0,20)*+{G/(P\cap Q) }="1";
(-20,0)*+{G/Q}="2";
(20,0)*+{G/P}="3";
{\ar@{->}_{\eta}"1"; "2"};
{\ar@{->}^{\tau} "1"; "3"};
\endxy,
 \end{equation}
where $G$ is a complex semisimple Lie group, $P$, $Q$ are its parabolic subgroups. If we take $G={\rm SO}(6,\mathbb{C})$ and suitable subgroups $P$ and $Q$, by using the method in \cite{Wa13}, we can also write down local coordinate charts of the above homogeneous spaces and the mapping $\eta$ and $\tau$ in terms of local coordinates to obtain (\ref{TOH}).

The construction of this paper is based on the fact that $V_{0}$ and $V_{1}$ in (\ref{keyvf}) span an abelian subalgebra. Its real version plays a very important role in developing a theory of quaternionic Monge-Amp\`ere operator in \cite{2020The} and tangential $k$-Cauchy-Fueter complex \cite{Ren1} over the Heisenberg group.

In Section 2, we introduce the twistor transform for $5$D complex Heisenberg group and derive the ASD Yang-Mills equation. In Section 3, we give Penrose-Ward correspondence between ASD connections and holomorphic vector bundles over the twistor space, which are trivial over a class of projective lines in the twistor space, and   construct a family of ASD connections by using Atiyah-Ward ans{\"a}tz. In Section 4, by  the real imbedding of $\mathscr H^{\mathbb{R}}$ into $\mathscr H$, we find that the ASD Yang-Mills equation coincides with the horizontal part of the contact instanton equation.
In Appendix, by constructing local coordinate charts of   homogeneous spaces in the double fibration (\ref{eq:G/P}) with  ${\rm G}={\rm SO}(6,\mathbb{C})$, we reproduce the basic ingredients of twistor method for  $5$-D complex Heisenberg group.

\section{The twistor transform on $5$D complex Heisenberg group}
\subsection{$\alpha$-planes and the twistor space of $5$D complex Heisenberg group}

Define a symmetric product $\langle\cdot, \cdot\rangle$ on $\mathbb{C}^{2}$ by
\begin{equation}\label{eq:beta}
\langle\mathbf{ w}  , \mathbf{\widetilde{w}}\rangle:=w_{1 } \widetilde{w}_{2}+\widetilde{w}_{1 }{w}_{2 }
\end{equation}
for $\mathbf{ w}=(w_{1},w_{2})$, $\mathbf{\widetilde{w}}=(\widetilde{w}_{1},\ \widetilde{w}_{2})\in \mathbb{C}^{2}$.
If we denote $\mathbf{y}=(\mathbf{y}_{0'},\mathbf{y}_{1'})$ with
$\mathbf{y}_{0'}=(y_{00'},y_{10'})$
and $\mathbf{y}_{1'}=(y_{01'},y_{11'})$, the multiplication (\ref{eq:w-multiplication}) of the Heisenberg group can be also written as
\begin{equation} \label{eq:w-multiplication2} \begin{split}&(\mathbf{y}_{0'}, \mathbf{ y}_{1'},t  ) \circ
(\mathbf{\widetilde{y}}_{0'}, \mathbf{\widetilde{y}}_{1'},\widetilde{t})=\left ( \mathbf{y}_{0'}+\mathbf{\widetilde{y}}_{0'}, \mathbf{ y}_{1'}+\mathbf{\widetilde{y}}_{1'},t+\widetilde{t} +
\langle\mathbf{ y}_{0'} , \mathbf{\widetilde{y}}_{1'}\rangle-\langle\mathbf{ y}_{1'} , \mathbf{\widetilde{y}}_{0'}\rangle\right).
\end{split}\end{equation}
 Recall the {\it
   left translation}: for fixed $(\mathbf{y}',t')\in\mathscr{H},$
\begin{equation}\label{l-t}
\tau_{(\mathbf{y}',t')}:
(\mathbf{y}, t)\mapsto(\mathbf{y}',t')\cdot(\mathbf{y},t) ,\qquad\qquad (\mathbf{y},t)\in\mathscr{H}
\end{equation}
and  the {\it   dilation}:
\begin{equation}
\delta_{r}:(\mathbf{y},t)\mapsto(r\mathbf{y},r^{2}t),
\end{equation}on the Heisenberg group.
A vector field $V$ over $\mathscr{H}$ is called {\it left invariant} if for any $(\mathbf{y}',t')\in\mathscr{H}$, we have
 $$\tau_{(\mathbf{y}',t')*}V=V,$$ where $\tau_{(\mathbf{y}',t')}$ is the left translation in (\ref{l-t}).
Define
\begin{equation} \label{eq:left-invariant}
(V_{AA'}f)(\mathbf{y} ,t ):=\left.\frac{\hbox{d}}{\hbox{d}s}f((\mathbf{y} ,t )(se_{AA'}))
\right|_{s=0},\qquad (Tf)(\mathbf{y} ,t ):=\left.\frac{\hbox{d}}{\hbox{d}s}f\left((\mathbf{y},t )(se_{0})\right)
\right|_{s=0}
 \end{equation}
for $A=0,1,A'=0',1',$ where $e_{AA'}$ is a vector in $\mathbb{C}^{5}$ with all entries vanishing except for  the $(AA')$-entry   to be $1$, and
$e_{0}=(0,0,0,0,1)$. For example,
\begin{equation} \label{eq:left-invariant2}\begin{split}
V_{00'}f:=\left.\frac{\hbox{d}}{\hbox{d}s}f\left((\mathbf{y} ,t )(se_{00'})\right)
\right|_{s=0}&=\left.\frac{\hbox{d}}{\hbox{d}s}f\left((\mathbf{y} ,t )(s,0,0,0,0)\right)
\right|_{s=0}\\
&=\left.\frac{\hbox{d}}{\hbox{d}s}f\left(y_{00'}+s,y_{10'},y_{01'},y_{11'},t-sy_{11'})\right)
\right|_{s=0}\\
&=\left(\frac{\partial}{\partial{y_{00'}}}-y_{11'}\frac{\partial}{\partial t}\right)f.
\end{split} \end{equation}

We can describe $\alpha$-planes, the integral surfaces of $V_{0}$ and $V_{1}$, explicitly as follows.
$\mathbb{C}^{5}\times\mathbb{C}P^{1}$ is the complex manifold with two coordinate charts $\mathbb{C}^{5}\times\mathbb{C}$ and $\mathbb{C}^{5}\times\mathbb{C}$, glued by the mapping $\kappa: \mathbb{C}^{5}\times\mathbb{C}\setminus \{0\}\rightarrow\mathbb{C}^{5}\times\mathbb{C}\setminus \{0\}$ given by \begin{equation}\label{trantran1}(\mathbf{y},\zeta)\longmapsto(\mathbf{y},\zeta^{-1}).
\end{equation}
Then if we use the nonhomogeneous coordinates,
$\tau:\mathbb{C}^{5}\times\mathbb{C}P^{1}\longrightarrow\mathbb{C}^{5}$ is given by  $(\mathbf{y},\zeta)\longrightarrow\mathbf{y} $ and  $(\mathbf{y},\widetilde{\zeta})\longrightarrow\mathbf{y} $, and
the vector field $V_{A}$ in (\ref{keyvf}) can be rewritten as
$$V_{A}=\pi_{1'}V_{A}^{\zeta}=\pi_{0'}\widetilde{V}_{A}^{\widetilde{\zeta}},$$
where $\zeta=\frac{\pi_{0'}}{\pi_{1'}}$, $\widetilde{\zeta}=\frac{\pi_{1'}}{\pi_{0'}}$ and $$V_{A}^{\zeta}=\zeta V_{A0'}-V_{A1'},\qquad\widetilde{V}_{A}^{\widetilde{\zeta}}= V_{A0'}-\widetilde{\zeta}V_{A1'}.$$
Let us check that the integral  surfaces  of $V_{A}^{\zeta}$ and $V_{A}^{\widetilde{\zeta}}$ lifted to $\mathbb{C}^{5}\times\mathbb{C}P^{1}$ by $\tau$ are the fiber of the mapping $\eta:\mathbb{C}^{5}\times\mathbb{C}\rightarrow\mathbb{C}^{4}$ and $\widetilde{\eta}:\mathbb{C}^{5}\times\mathbb{C}\rightarrow\mathbb{C}^{4}$, respectively.
\begin{prop}
Let $\eta:\mathbb{C}^{5}\times \mathbb{C}\rightarrow W\cong\mathbb{C}^{4}$ be the mapping given by
\begin{equation}\label{eq:psi}
\mathbf{\omega}=\eta(\mathbf{y},t,\zeta)
=
\begin{pmatrix}
\eta_{0}(\mathbf{y},t,\zeta)\\
\eta_{1}(\mathbf{y},t,\zeta)\\
\eta_{2}(\mathbf{y},t,\zeta)\\
\eta_{3}(\mathbf{y},t,\zeta)
\end{pmatrix}
=
\begin{pmatrix}
y_{00'}+\zeta y_{01'}\\
y_{10'}+\zeta y_{11'}\\
t-\langle \mathbf{y}_{0'}+\zeta\mathbf{y}_{1'},\mathbf{y}_{1'}\rangle\\
\zeta
\end{pmatrix}\in W.
\end{equation}
Then $\tau\circ\eta^{-1}(w)$ is a $2$-D plane parameterized as
\begin{equation}\label{eq:lienar eq0}
     \left\{\begin{array}{l}
y_{01'}=s_{0},\\
y_{11'}=s_{1},\\
y_{00'}=\omega_{0}-\zeta s_{0},\\
y_{10'}=\omega_{1}-\zeta s_{1},\\
t=\omega_{2}+s_{1}\omega_{0}+s_{0}\omega_{1},
         \end{array}  \right.
  \end{equation}
with parameters $s_{0},s_{1}\in \mathbb{C}$. $V_{0}^{\zeta}$ and $V_{1}^{\zeta}$ are tangential to this plane, and so it is an $\alpha$-plane.
\end{prop}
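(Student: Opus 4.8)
The plan is to establish the two assertions in turn: first, that $\tau\circ\eta^{-1}(w)$ is exactly the $2$-plane (\ref{eq:lienar eq0}); second, that $V_0^\zeta$ and $V_1^\zeta$ are tangent to it, so that it is an integral surface of $\mathrm{span}\{V_0^\zeta,V_1^\zeta\}$ and hence an $\alpha$-plane. For the first assertion I would solve the system $\eta(\mathbf{y},t,\zeta)=w$ componentwise. The fourth component gives $\zeta=\omega_3$, so $\zeta$ is constant along the fiber; the first two components $y_{00'}+\zeta y_{01'}=\omega_0$ and $y_{10'}+\zeta y_{11'}=\omega_1$ are linear and leave $y_{01'},y_{11'}$ free, so setting $s_0:=y_{01'}$ and $s_1:=y_{11'}$ yields $y_{00'}=\omega_0-\zeta s_0$ and $y_{10'}=\omega_1-\zeta s_1$. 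The only substantive step is the third component: expanding the symmetric product by (\ref{eq:beta}) gives $\langle\mathbf{y}_{0'}+\zeta\mathbf{y}_{1'},\mathbf{y}_{1'}\rangle=y_{00'}y_{11'}+y_{01'}y_{10'}+2\zeta y_{01'}y_{11'}$, and substituting the expressions just obtained causes the $\zeta$-dependent quadratic terms $\zeta s_0 s_1$ to cancel, leaving $t=\omega_2+s_1\omega_0+s_0\omega_1$. A dimension count (three equations in five unknowns, with $\zeta$ fixed) confirms the fiber is $2$-dimensional, and since $\tau$ only forgets $\zeta$ it maps the fiber isomorphically onto the plane (\ref{eq:lienar eq0}).

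For the tangency I would differentiate (\ref{eq:lienar eq0}) in $s_0$ and $s_1$. Using the relations $\omega_0=y_{00'}+\zeta y_{01'}$ and $\omega_1=y_{10'}+\zeta y_{11'}$ valid on the plane to re-express the constants appearing in $\partial t/\partial s_1$ and $\partial t/\partial s_0$, the two tangent vectors become $\partial_{s_0}=-\zeta\partial_{y_{00'}}+\partial_{y_{01'}}+(y_{10'}+\zeta y_{11'})\partial_t$ and $\partial_{s_1}=-\zeta\partial_{y_{10'}}+\partial_{y_{11'}}+(y_{00'}+\zeta y_{01'})\partial_t$. Inserting the explicit forms of the $V_{AA'}$ from (\ref{eq:Y}) into $V_A^\zeta=\zeta V_{A0'}-V_{A1'}$, one checks these are exactly $-V_0^\zeta$ and $-V_1^\zeta$. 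Hence $V_0^\zeta$ and $V_1^\zeta$ span the tangent plane, proving that the surface is an $\alpha$-plane.

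A conceptually cleaner alternative for the second half is to verify directly that every component $\eta_j$ is a first integral of the distribution, i.e. $V_0^\zeta\eta_j=V_1^\zeta\eta_j=0$ for $j=0,1,2,3$; this immediately shows each fiber of $\eta$ is a union of integral surfaces of $\{V_0^\zeta,V_1^\zeta\}$, which by the dimension count must be a single $\alpha$-plane. I do not anticipate a genuine obstacle, since the whole proposition reduces to a direct verification. The one place demanding care is the bookkeeping in the component $\eta_2$ (equivalently, the $t$-equation): there the contributions of the $T$-terms inside the $V_{AA'}$ must cancel exactly against the derivatives of the symmetric product, and a stray sign or a missing factor of $2$ in $2\zeta y_{01'}y_{11'}$ would destroy that cancellation. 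Confirming this cancellation is the crux of the argument.
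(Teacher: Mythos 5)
Your proposal is correct and follows essentially the same route as the paper: the paper verifies tangency by checking $V_{A}^{\zeta}(\eta_{j})=0$ directly (your stated ``cleaner alternative''), using exactly the expansion $\langle\mathbf{y}_{0'}+\zeta\mathbf{y}_{1'},\mathbf{y}_{1'}\rangle=y_{00'}y_{11'}+y_{10'}y_{01'}+2\zeta y_{01'}y_{11'}$ you identify as the crux, and then solves the same linear system to obtain the parameterization. Your primary variant --- computing $\partial_{s_{0}},\partial_{s_{1}}$ from the parameterization and matching them with $-V_{0}^{\zeta},-V_{1}^{\zeta}$ --- is a correct and equivalent direct verification, and all your intermediate formulas check out.
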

\begin{proof}
It is direct to see that $V_{AA'}(y_{BB'})=\delta_{AB}\delta_{A'B'}$ by the expression of $V_{AA'}$'s in (\ref{eq:Y}). So we have
 $$V_{A}^{\zeta}(\eta_{j}(\mathbf{y},t,\zeta))=0,\quad j=0,1.$$
Noting that
\begin{equation}\label{currr000}\langle \mathbf{y}_{0'}+\zeta\mathbf{y}_{1'},\mathbf{y}_{1'}\rangle=y_{00'}y_{11'}+y_{10'}y_{01'}+2\zeta y_{01'}y_{11'},\end{equation}
 we have
\begin{equation*}\begin{split}
V_{0}^{\zeta}(\eta_{2}(\mathbf{y},t,\zeta))&
=\zeta\left(-y_{11'}-y_{11'}\right)-\left(y_{10'}-y_{10'}-2\zeta y_{11'}\right)=0,\\
V_{1}^{\zeta}(\eta_{2}(\mathbf{y},t,\zeta))&
=\zeta\left(-y_{01'}-y_{01'}\right)-\left(y_{00'}-y_{00'}-2\zeta y_{01'}\right)=0.
\end{split}\end{equation*}
Thus $V_{0}^{\zeta}$ and $V_{1}^{\zeta}$ are tangential to each fiber of $\eta$.
Note that for a fixed point $\mathbf{\omega}=(\omega_{0},\omega_{1},\omega_{2},\zeta)$ $\in W$, $\eta^{-1}(w)$ in $\mathbb{C}^{5}\times\mathbb{C}$ has fixed last coordinate $\zeta$. So $\tau\circ\eta^{-1}(w)$ is the plane determined by
\begin{equation}\label{newlss}
\eta_{0}(\mathbf{y},t,\zeta)=\omega_{0},\quad\eta_{1}(\mathbf{y},t,\zeta)=\omega_{1},
\quad\eta_{2}(\mathbf{y},t,\zeta)=\omega_{2}.
\end{equation}
The solutions of linear equations
$
     \left\{\begin{array}{l}
y_{00'}+\zeta y_{01'}=\omega_{0}\\
y_{10'}+\zeta y_{11'}=\omega_{1}
         \end{array}   \right.
$
are given by
$y_{01'}=s_{0},\ y_{00'}=\omega_{0}-s_{0}\zeta,\ y_{11'}=s_{1},\ y_{10'}=\omega_{1}-s_{1}\zeta$ with parameters $s_{0},s_{1}\in \mathbb{C}$.
Then $t$ is given by the last equation in (\ref{eq:lienar eq0}) by the third equation of (\ref{eq:psi}).
\end{proof}

On the other hand, if $\pi_{0'}\neq0,$   integral surfaces of $\widetilde{V}_{0}^{\widetilde{\zeta}}$ and $\widetilde{V}_{1}^{\widetilde{\zeta}}$ are fibers of the mapping $\widetilde{\eta}:\mathbb{C}^{5}\times\mathbb{C}\longrightarrow \widetilde{W}\cong\mathbb{C}^{4}$ given by
\begin{equation}\label{eq:psi2}
\widetilde{\mathbf{\omega}}=\widetilde{\eta}(\mathbf{y},t,\widetilde{\zeta})
=
\begin{pmatrix}
\widetilde{\eta}_{0}(\mathbf{y},t,\widetilde{\zeta})\\
\widetilde{\eta}_{1}(\mathbf{y},t,\widetilde{\zeta})\\
\widetilde{\eta}_{2}(\mathbf{y},t,\widetilde{\zeta})\\
\widetilde{\eta}_{3}(\mathbf{y},t,\widetilde{\zeta})
\end{pmatrix}
=
\begin{pmatrix}
\widetilde{\zeta}y_{00'}+y_{01'}\\
\widetilde{\zeta}y_{10'}+y_{11'}\\
t+\langle\ \widetilde{\zeta}\mathbf{y}_{0'}+\mathbf{y}_{1'},\mathbf{y}_{0'}\rangle\\
\widetilde{\zeta}
\end{pmatrix}\in\widetilde{W},
\end{equation}
and for $\widetilde{\mathbf{\omega}}=(\widetilde{w}_{0},\widetilde{w}_{1},\widetilde{w}_{2},\widetilde{\zeta})\in\widetilde{W}$, the $\alpha$-plane $\tau\circ\widetilde{\eta}^{-1}(\widetilde{\mathbf{\omega}})$ is given by
\begin{equation}\label{eq:lienar eq20}
     \left\{\begin{array}{l}
y_{01'}=\widetilde{s}_{0}\\
y_{11'}=\widetilde{s}_{1}\\
y_{00'}=\widetilde{\omega}_{0}-\widetilde{\zeta}\widetilde{s}_{0}\\
y_{10'}=\widetilde{\omega}_{1}-\widetilde{\zeta}\widetilde{s}_{1}\\
t=\widetilde{\omega}_{2}+\widetilde{s}_{1}\widetilde{\omega}_{0}+\widetilde{s}_{0}\widetilde{\omega}_{1},
         \end{array}   \right.
  \end{equation}
with parameters $\widetilde{s}_{0},\widetilde{s}_{1}\in \mathbb{C}$.
If $(\mathbf{y},t,\zeta)$ satisfies (\ref{newlss}) with $\omega_{0}$, $\omega_{1}$, $\omega_{2}$, $\zeta=\widetilde{\zeta}^{-1}\in\mathbb{C}$, then we have
\begin{equation}
\widetilde{\eta}_{A}(\mathbf{y},t,\widetilde{\zeta})=\zeta^{-1}\omega_{A},\quad A=0,1,
  \end{equation}
  and
\begin{equation}\label{lastt11}
\begin{split}
\widetilde{\eta}_{2}(\mathbf{y},t,\widetilde{\zeta})&=t+\langle\ \zeta^{-1}\mathbf{y}_{0'}+\mathbf{y}_{1'},\mathbf{y}_{0'}\rangle\\
&=t+y_{01'}y_{10'}+y_{11'}y_{00'}+2\zeta^{-1}y_{00'}y_{10'}\\
&=t-\langle \mathbf{y}_{0'}+\zeta\mathbf{y}_{1'},\mathbf{y}_{1'}\rangle+2\zeta^{-1}(y_{00'}+\zeta y_{01'})(y_{10'}+\zeta y_{11'})\\
&=\omega_{2}+2\zeta^{-1}\omega_{0}\omega_{1},
         \end{split}
  \end{equation}
by (\ref{currr000}).
So $\kappa(\mathbf{y},t,\zeta)=(\mathbf{y},t,\widetilde{\zeta})$ maps a fiber of $\eta$ over $(\omega_{0},\omega_{1},\omega_{2},\zeta)$ to a fiber of $\widetilde{\eta}$ over
 $(\widetilde{\omega}_{0},\widetilde{\omega}_{1},\widetilde{\omega}_{2},\widetilde{\zeta})$
with the mapping
 \begin{equation}\label{transi1}\begin{split}
 \Phi:\quad W\setminus\{\zeta=0\}&\rightarrow \widetilde{W}\setminus\{\widetilde{\zeta}=0\}\\
 (w_0,w_1,w_2,\zeta)&\mapsto(\widetilde{\omega}_{0},\widetilde{\omega}_{1},\widetilde{\omega}_{2},\widetilde{\zeta})=(\zeta^{-1}w_0, \zeta^{-1}w_1,w_2+2\zeta^{-1}w_{1}w_{2} ,\zeta^{-1}),
 \end{split}\end{equation}
which glues $W$ and $\widetilde{W}$ to get a complex manifold $\mathcal{P}$. It is the moduli space of all $\alpha$-planes,
which is our twistor space.

Moreover, we have the commutative diagram
\begin{equation}\label{dia11}\xymatrix{
  \mathbb{C}^{5}\times\mathbb{C}\ar[r]^-{\kappa}\ar[d]^{\eta}&
  \mathbb{C}^{5}\times\mathbb{C}\ar[d]^{\widetilde{\eta}}&
  \\
  W\ar[r]_{\Phi}&\widetilde{W}&
   }
  \end{equation}
In fact, for $\left(\mathbf{y}_{0'},\mathbf{y}_{1'},\zeta\right)\in\mathbb{C}^{5}\times\mathbb{C}$,
\begin{equation}\begin{split}
\Phi\circ\eta\left(\mathbf{y}_{0'},\mathbf{y}_{1'},\zeta\right)
&=\Phi\left(\mathbf{y}_{0'}+\zeta\mathbf{y}_{1'},t-\langle \mathbf{y}_{0'}+\zeta\mathbf{y}_{1'},\mathbf{y}_{1'}\rangle,\zeta\right)\\
&=\left(\zeta^{-1}\mathbf{y}_{0'}+\mathbf{y}_{1'},t+\langle \zeta^{-1}\mathbf{y}_{0'}+\mathbf{y}_{1'},\mathbf{y}_{0'}\rangle,\zeta^{-1}\right)\\
&=\widetilde{\eta}\circ\kappa\left(\mathbf{y}_{0'},\mathbf{y}_{1'},\zeta\right),
\end{split}\end{equation}
by (\ref{lastt11}). Thus $\eta$ and $\widetilde{\eta}$ are glued to give the mapping $\mathbb{C}^{5}\times \mathbb{C}P^{1}\longrightarrow\mathcal{P}$ in the double fibration (\ref{TOH}).

\subsection{ASD Equation}
Let $1$-forms $\{\theta^{AA'},\theta\}$ be dual to left invariant vector fields $\{V_{AA'},T\}$ in (\ref{eq:Y}) on $\mathscr H$, i.e.
$
\theta^{AA'}(V_{BB'})=\delta_{AB}\delta_{A'B'},\ \theta^{AA'}(T)=0,\
   \theta(V_{BB'})=0, \ \theta(T)=1,
$
where $A,B=0,1$ and $A',B'=0',1'$.
Then ${\rm{d}}u= \sum_{A,A'} V_{AA'}u\cdot\theta^{AA'}+Tu\cdot\theta$ for a function $u$ on $\mathscr H$.
By the expression of $V_{AA'}$ in (\ref{eq:Y}), we get that
$
\theta^{AA'}={\rm{d}}y_{AA'},\
\theta={\rm{d}}t+y_{11'}{\rm{d}}y_{00'}+y_{01'}{\rm{d}}y_{10'}-y_{10'}{\rm{d}}y_{01'}-y_{00'}{\rm{d}}y_{11'}.
$
Exterior differentiation gives us
${\rm{d}}\theta^{AA'}=0$ $(A=0,1, A'=0',1')$
and ${\rm{d}}\theta=-2\theta^{00'}\wedge \theta^{11'}-2\theta^{10'}\wedge \theta^{01'}.$

The curvature of the connection form $\Phi$ is $F =({\rm{d}}+\Phi)^2={\rm{d}}\Phi+\Phi\wedge \Phi$ given by
  \begin{equation}\label{eq:curvature}\begin{split}
     F(X,Y)&={\rm{d}}\Phi(X,Y)+\Phi\wedge \Phi(X,Y)\\&=X(\Phi(Y))-Y(\Phi(X))- \Phi([X,Y]) +\Phi(X)\Phi(Y)-\Phi(Y)\Phi(X)
     \\&=X\Phi_Y-Y \Phi_X- \Phi_{[X,Y]} +[\Phi_X,\Phi_Y].
    \end{split}  \end{equation}Here we use the notation $\Phi_X:=\Phi(X)$ for a vector field $X$ on $\mathscr H$.
Define the $\mathfrak g$-valued differential operators associated to the connection form $\Phi$
\begin{equation}\label{eq:Dj}\begin{split}&
   \nabla_A =\nabla_{A1'}-\zeta \nabla_{A0'}:=(V_{A1'}+\Phi_{A1'})-\zeta(V_{A0'}+\Phi_{A0'}),
    \end{split}\end{equation}
$A=0,1,$ for fixed $\zeta\in\mathbb{C}$. A connection on $\mathscr{H}$ is {\it ASD } if its curvature vanishes along each $\alpha$-surface, i.e.
\begin{equation}\label{eq:yang}
F(V_0^{\zeta},V_1^{\zeta})= 0.
\end{equation}
The ASD condition (\ref{eq:yang}) is equivalent to
\begin{equation*}
\zeta^{2}F(V_{00'},V_{10'})-\zeta(F(V_{00'},V_{11'})+F(V_{01'},V_{10'}))+F(V_{01'},V_{11'})=0.
\end{equation*}
Comparing the coefficients of $\zeta^{2}$, $\zeta^{1}$ and $\zeta^{0}$, we get
\begin{equation}\label{eq:Y5}F( V_{00'},V_{10'})=0,\quad F( V_{00'},V_{11'})+F( V_{01'},V_{10'})=0,\quad F( V_{01'},V_{11'})=0,\end{equation}
which is equivalent to (\ref{eq:ASD}) by (\ref{eq:curvature}). Here $\Phi_{[V_{00'},V_{11'}]}+\Phi_{[V_{01'},V_{10'}]}=0$ by the brackets in (\ref{eq:YYT}).

\section{Penrose-Ward correspondence and Atiyah-Ward ans\"{a}tz on   $5$D Heisenberg group}

\subsection{ Proof of Theorem \ref{pwc}}
The proof is similar to the classical case (cf. \cite{MR1054377}). The objects in (1) and (2) are regarded as being specified modulo the usual equivalence relations.
Given a $GL(n,\mathbb{C})$ vector bundle $V$ with ASD connection $\nabla$ over $U\subseteq\mathbb{C}^{5}$, to construct a vector bundle $E$ over $\hat{U}$, we assign a copy of the vector space $\mathbb{C}^{n}$ to each point $Z$ of $\hat{U}$ with
 \begin{equation}\label{NBA}
E_{Z}=\{\psi:\nabla\psi|_{\widetilde{Z}\bigcap U}=0\}
\end{equation}
over $Z$, where $\widetilde{Z}=\tau\circ\eta^{-1}(Z).$ As $\nabla$ is an ASD connection,
for fixed $\zeta\in\mathbb{C}$, the integrability condition $F(V_0,V_1)=0$ implies the existence of solutions to the equations
$(V_{A}+\Phi_{A})h=0,$ $A=0,1$ on connected and simply connected domain $\widetilde{Z}\bigcap U$.
So we have $E_{Z}\neq\emptyset$. Since the whole procedure is holomorphic, we have constructed a holomorphic vector bundle $E$ over $\hat{U}$.
By construction (\ref{NBA}), the bundle $E$ is trivial when restricted to the projective line $\hat{x}$ for a point $x$ in $U$. This is because a vector $\psi\in V_{x}$ at $x$ determines a parallel field $\psi$ on each $\alpha$-plane through $x$, and hence determines a point in $E_{Z}$ for every point $Z$ on the line $\hat{x}$ in $\hat{U}$. Namely, each $\psi\in V_{x}\cong\mathbb{C}^{n}$ determines a holomorphic section of $E|_{\hat{x}}$. Therefore $n$ linearly independent $\psi$'s in $V_{x}$ give us $n$ linearly independent sections of $E|_{\hat{x}}$. So $E|_{\hat{x}}$ is trivial. This completes the proof of the first part of the theorem.

Conversely, let $E'$ be a holomorphic rank-$n$ vector bundle over $\hat{U}$, such that $E'|_{\hat{x}}$ is trivial for all $x\in U$. We have to construct a connection form $\Phi$ on $U$. Since $\hat{U}$ is covered by two charts $\hat{U}\cap W$ and $\hat{U}\cap\widetilde{W}$ in (\ref{transi1}), the vector bundle $E'$ consists of two parts, which are $E'_{\hat{U}\cap W}\cong (\hat{U}\cap W)\times\mathbb{C}^{n}$ and $E'_{\hat{U}\cap\widetilde{W}}\cong (\hat{U}\cap\widetilde{W})\times\mathbb{C}^{n}$ glued by a holomorphic $n\times n$ transition matrix $F$ on the intersection $\hat{U}\cap W\cap \widetilde{W}$.
The transition relation is
$$\widetilde{\xi}=F\xi,$$
 where $\widetilde{\xi}$ and $\xi$ are column $n$-vectors whose components serve as coordinates on the fibers of $E'$ above $\hat{U}\cap W$ and $\hat{U}\cap\widetilde{W}$, respectively.
 Consider the pull-back $\eta^{*}E'$, which is a bundle over $\mathcal{F}_{U}=U\times\mathbb{C}P^{1}$, where $\eta$ is given by (\ref{eq:psi}) and  (\ref{eq:psi2}). For a point $Z\in\hat{U}$, the restriction of $\eta^{*}E'$ to $\eta^{-1}(Z)\in\mathcal{F}_{U}$ is a product bundle.
 We define a bundle $E\longrightarrow U$ by $E_{x}=\Gamma(\hat{x},E')$, where $\Gamma$ denotes the space of holomorphic sections. And we have $\eta^{*}E'=\tau^{*}E.$
Recall that the tangential space of leaves of the projection $\eta :\mathcal{F}_{U}\longrightarrow\hat{U}$ are spanned by the vector fields $V_{0}$ and $V_{1}$ on $\mathcal{F}$. We define a partial connection $\nabla$ that allows us to differentiate the sections of $\eta^{*}E'$ along the fibers, where we have $\nabla_{V_{A}}s=V_{A}s$ in the trivialization
$\eta^{*}E'|_{\eta^{-1}(Z)}=\eta^{-1}(Z)\times E'_{Z}$. The sections for which $\nabla_{V_{A}}s=V_{A}s$ vanish are the pull-backs to $\mathcal{F}$ of local sections of $E'$.
We now pick a local trivialization of $E$ over open subset $U$. This determines a local trivializaton of $\eta^{*}E'$, in which
 $$\nabla_{A}=V_{A}+\Phi_{A},\quad A=0,1,$$
for some matrix-valued functions $\Phi$ in $\zeta$ and $(\mathbf{y},t)$.
Note that
$\eta^{*}E'|_{U\times O_0}\cong U\times O_0\times\mathbb{C}^{n}$ and $\eta^{*}E'|_{U\times O_1}\cong U\times O_1\times\mathbb{C}^{n}$, where $O_0\cong\mathbb{C}$ and $O_1\cong\mathbb{C}$ are a covering of $\mathbb{CP}^1$.
Their transition function $G$ over the intersection $U\times\left(\mathbb{C}\setminus\{0\}\right)$ is given by
\begin{equation*}
G(\mathbf{y},t,\zeta)=F\left(\mathbf{y}_{0'}+\zeta\mathbf{y}_{1'},t-\langle \mathbf{y}_{0'}+\zeta \mathbf{y}_{1'},\mathbf{y}_{1'}\rangle,\zeta\right).
\end{equation*}

  Now let us find the holomorphic sections of $E'|_{\hat{x}}$. Find non singular $n\times n$ matrices $f$ and $\widetilde{f}$, with $f$ holomorphic over $W\cap\hat{x}$ and $\widetilde{f}$ holomorphic over $\widetilde{W}\cap{\hat{x}}$, such that $F=\widetilde{f}^{-1}f$ is valid on $W\cap \widetilde{W}\cap \hat{x}$. Since $E'_{\hat{x}}$ is trivial, by the {\it Birkhoff factorization}, such matrices $f$ and $\widetilde{f}$ must exist. Each section of $E'_{\hat{x}}$ is then given by $\xi=f^{-1}\psi$, $\widetilde{\xi}=\widetilde{f}^{-1}\psi$, where $\psi$ is a constant $n$-vector, i.e. $\psi\in\mathbb{C}^{n}.$ Now we identify $\widetilde{f}$ and $f$ with their pulling back to $U\times\mathbb{CP}^1$.
So we have that
\begin{equation}\label{kpw}\begin{split}
0&=\nabla_{A}\psi=(-V_{A1'}+\zeta V_{A0'})\psi+(-\Phi_{A1'}+\zeta \Phi_{A0'})\psi\\
&=(-V_{A1'}f+\zeta V_{A0'}f)\xi+(-\Phi_{A1'}+\zeta \Phi_{A0'})\psi\\
&=-(V_{A1'}f\cdot f^{-1}+\Phi_{A1'})\psi+\zeta(V_{A0'}f\cdot f^{-1}+\Phi_{A0'})\psi.
\end{split}
\end{equation}
So we have $\Phi_{AA'}=-V_{AA'}f\cdot f^{-1}.$
Moreover, since $V_{A}$ is tangential to the fiber of $\eta$, we have
\begin{equation}\label{kpw2}\begin{split}
0&=V_{A}G
=V_{A}(\widetilde{f}^{-1}f)=-\widetilde{f}^{-1}V_{A}\widetilde{f}\cdot\widetilde{f}^{-1}f+\widetilde{f}^{-1}V_{A}f\\
&=\widetilde{f}^{-1}(V_{A}f\cdot f^{-1}-V_{A}\widetilde{f}\cdot\widetilde{f}^{-1})f.
\end{split}
\end{equation}
We have $V_{A}f\cdot f^{-1}=V_{A}\widetilde{f}\cdot\widetilde{f}^{-1}$.
By Liouville's theorem, both sides must be of the form $-\Phi_{A1'}+ \zeta\Phi_{A0'}$ for $A=0,1$, where $\Phi_{A0'}$ and $\Phi_{A1'}$ are matrix-valued functions over $\mathbb{C}^{5}$. Therefore
\begin{equation*}
V_{A1'}f\cdot f^{-1}-\zeta V_{A0'}f\cdot f^{-1}=-\Phi_{A1'}+ \zeta\Phi_{A0'},
\end{equation*}
i.e.
 \begin{equation}\label{eq:asd-eq}\begin{split}&
  ( V_{A1'}+\Phi_{A1'})f-\zeta ( V_{A0'}+\Phi_{A0'})f=0.
\end{split}\end{equation}
Thus $f$ is the solution to
\begin{equation*}
   \nabla_{A1'}f-\zeta \nabla_{A0'}f=0,
\end{equation*}
 $A=0,1$. Consequently, we have $F(V_{0},V_{1})=0,$ i.e. $\nabla$ is ASD connection.
Similarly, $\widetilde{f}$ is solution to
\begin{equation}\label{eq:asd-eq2}\begin{split}&
  ( V_{A0'}+\Phi_{A0'})\widetilde{f}-\widetilde{\zeta}( V_{A1'}+\Phi_{A1'})\widetilde{f}=0,
\end{split}\end{equation}
i.e.
$\nabla_{A0'}\widetilde{f}-\widetilde{\zeta}\nabla_{A1'}\widetilde{f}=0,
$ where $A=0,1$.
From (\ref{eq:asd-eq}) and (\ref{eq:asd-eq2}), we have
\begin{equation}\label{eq:asd-eq3}\begin{split}
&\Phi_{A1'}=-V_{A1'}f\cdot f^{-1}|_{\zeta=0},\qquad
\Phi_{A0'}=-V_{A0'}\widetilde{f}\cdot\widetilde{f}^{-1}|_{\widetilde{\zeta}=0},
\end{split}\end{equation}
where $A=0,1$. So the connection $\Phi$ has the form
\begin{equation}\label{eq:asd-eq4}
\Phi=-{\rm d}_{1}f\cdot f^{-1}|_{\zeta=0}-{\rm d}_{0}\widetilde{f}\cdot \widetilde{f}^{-1}|_{\widetilde{\zeta}=0}+\Phi_{T}\theta,
\end{equation}
which is ASD.
\qed

\subsection{Atiyah-Ward ans\"{a}tz For ${\rm GL}(2,\mathbb{C})$  ASD connection on 5D Heisenberg group}
By the construction in the proof of Theorem \ref{pwc}, we get a ${\rm GL}(2,\mathbb{C})$ ASD connection if we
 find a transition matrix for a rank-2 bundle over $\hat{U}$, which is trivial along the projective line
  $\hat{x}=\eta\circ\tau^{-1}(x)$ for each $x\in U\subseteq\mathscr H$. Namely, to find a $2\times2$ matrix
$$G(\mathbf{y},t,\zeta)=\left(\begin{array} {cc} \zeta&\gamma(\mathbf{y},t,\zeta)\\0&\zeta^{-1} \end{array}\right)$$
 defined over $U\times \left(\mathbb{C}\setminus\{0\}\right)$ (the intersection of two coordinate charts of $\mathbb{C}^{5}\times \mathbb{C}P^{1}$), and is constant along the fibers of $\eta$ and trivial over $\hat{\mathbf{x}}$ for each $\mathbf{x}=(\mathbf{y},t)\in U\subseteq\mathbb{C}^{5}$.
It defines a function $\hat{G}$ on $\hat{U}\cap W\cap\widetilde{W}$
$$G(\mathbf{y},t,\zeta)=  \hat{G}\left(\mathbf{y}_{0'}+\zeta \mathbf{y}_{1'},t-\langle \mathbf{y}_{0'}+\zeta \mathbf{y}_{1'},\mathbf{y}_{1'}\rangle,\zeta\right).$$
This is called {\it Atiyah-Ward Ans\"{a}tz}.

In terms of Laurent series of $\zeta$, we write
\begin{equation}\label{cure112233000}
\gamma=\sum_{i=-\infty}^{+\infty} \gamma_{-i}\zeta^{i}=\gamma_{-}+\gamma_{0}+\gamma_{+},
\end{equation}
where $\gamma_{-}:=\sum_{i=1}^{i=\infty} \gamma_{i}\zeta^{-i}$, $\gamma_{+}:=\sum_{i=1}^{i=\infty} \gamma_{-i}\zeta^{i}$.
Since $\gamma$ need to be constant along each fiber of $\eta$, we have $V_{A}\gamma\equiv0$, $A=0,1,$ i.e.
\begin{equation}\label{eq:induct}
V_{A1'}\gamma_{i}=V_{A0'}\gamma_{i+1},\quad A=0,1,\quad i=\cdots,-1,0,1,\cdots.
\end{equation}
\begin{lem}\label{Poin}
Suppose that $U\subseteq\mathbb{C}^{5}$ is elementary and the one form $\omega=g_{0}\theta^{00'}+g_{1}\theta^{10'}$ (or $\omega=g_{0}\theta^{01'}+g_{1}\theta^{11'}$)
with $g_{0},g_{1}\in \mathcal{O}(U)$ is $\hbox{d}_{0}$-closed (or $\hbox{d}_{1}$-closed),
i.e. $\hbox{d}_{0}\omega=0$ (or $\hbox{d}_{1}\omega=0$).
Then there exists a function $f\in \mathcal{O}(U)$, such that
$\hbox{d}_{0}f=\omega$ (or $\hbox{d}_{1}f=\omega$).
\end{lem}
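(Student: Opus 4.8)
The plan is to recognise the equation ${\rm d}_0 f=\omega$ as a leafwise Poincar\'e lemma for the foliation whose leaves are the integral surfaces of the commuting pair $V_{00'},V_{10'}$, and to solve it by integrating along these leaves. First I would unwind the hypothesis. Extending ${\rm d}_0$ to one-forms by ${\rm d}_0(g\,\theta^{AA'})=({\rm d}_0 g)\wedge\theta^{AA'}$, which is legitimate since $\theta^{00'}={\rm d}y_{00'}$ and $\theta^{10'}={\rm d}y_{10'}$ are closed, the closedness ${\rm d}_0\omega=0$ reduces to the single scalar compatibility condition
\begin{equation*}
V_{00'}g_1-V_{10'}g_0=0,
\end{equation*}
while ${\rm d}_0 f=\omega$ is the pair $V_{00'}f=g_0$, $V_{10'}f=g_1$. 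By (\ref{eq:YYT}) we have $[V_{00'},V_{10'}]=0$, so these two fields span an integrable rank-two distribution; its leaves are precisely the $\alpha$-planes corresponding to $\widetilde\zeta=0$, and the compatibility condition is exactly the integrability needed to solve the pair along each leaf.

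Next I would straighten the distribution by a global holomorphic change of coordinates. A common first integral of $V_{00'}$ and $V_{10'}$ is $h:=t+y_{00'}y_{11'}+y_{10'}y_{01'}$, since $V_{00'}h=y_{11'}-y_{11'}=0$ and $V_{10'}h=y_{01'}-y_{01'}=0$. Thus $(y_{00'},y_{10'},y_{01'},y_{11'},h)$ is a holomorphic coordinate system on $\mathbb{C}^5$ (the change being triangular, hence biholomorphic) in which $V_{00'}=\partial_{y_{00'}}$ and $V_{10'}=\partial_{y_{10'}}$, while $y_{01'},y_{11'},h$ are constant along the leaves. In these coordinates the problem becomes the ordinary two-variable Poincar\'e lemma in $(y_{00'},y_{10'})$ with $(y_{01'},y_{11'},h)$ as holomorphic parameters, the compatibility condition reading $\partial_{y_{00'}}g_1=\partial_{y_{10'}}g_0$.

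I would then produce $f$ leafwise. For each fixed value of the transverse parameters the corresponding leaf slice $\widetilde Z\cap U$ is, by the \emph{elementary} hypothesis, connected and simply connected, and the restricted one-form $g_0\,{\rm d}y_{00'}+g_1\,{\rm d}y_{10'}$ is holomorphic and closed there; hence the path integral
\begin{equation*}
f(P)=\int_{\gamma_P}\big(g_0\,{\rm d}y_{00'}+g_1\,{\rm d}y_{10'}\big),
\end{equation*}
taken along any path $\gamma_P$ running within the leaf from a fixed reference point to $P$, is independent of $\gamma_P$, and differentiating along the leaf recovers $V_{00'}f=g_0$, $V_{10'}f=g_1$. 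The ${\rm d}_1$-case is identical, using instead the commuting pair $V_{01'},V_{11'}$ (again $[V_{01'},V_{11'}]=0$) whose leaves are the $\alpha$-planes with $\zeta=0$, and the first integral $h'=t-y_{01'}y_{10'}-y_{11'}y_{00'}$.

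The step I expect to be the main obstacle is not the pointwise integration but upgrading the leafwise primitives to a single holomorphic function on all of $U$: the leafwise solution is determined only up to an additive constant that may depend on the transverse parameters, so one must pin this constant down so that $f$ depends holomorphically on $(y_{01'},y_{11'},h)$ and is single-valued on $U$. This is exactly where the elementary hypothesis is indispensable — connectedness and simple-connectedness of each slice make the path integral well defined leafwise — and holomorphic dependence on the transverse coordinates then follows by differentiating under the integral sign, using the holomorphicity of $g_0,g_1$ together with a holomorphically varying choice of reference point across the leaves.
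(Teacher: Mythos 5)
Your proposal is correct and follows essentially the same route as the paper: the compatibility condition $V_{00'}g_1=V_{10'}g_0$, the straightening of $V_{00'},V_{10'}$ by the coordinate change whose new fifth coordinate is exactly your first integral $h=t+y_{00'}y_{11'}+y_{10'}y_{01'}$, and then the two-variable Poincar\'e lemma with the remaining variables as holomorphic parameters. The only difference is presentational: you carry out the Poincar\'e lemma by explicit leafwise path integration and flag the issues of single-valuedness and holomorphic dependence on the transverse parameters, which the paper subsumes in its citation of Poincar\'e's lemma.
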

\begin{proof}
 By definition, $\hbox{d}_{0}\omega=0$ is equivalent to
\begin{equation}\label{eq:Y3}
V_{10'}g_{0}=V_{00'}g_{1},
\end{equation}
and $\hbox{d}_{0}f=\omega$ is equivalent to
\begin{equation}\label{eq:Y4}
         V_{00'}f=g_{0}\quad and\quad
         V_{10'}f=g_{1}.
\end{equation}
If we take coordinate transformation $\Psi:\mathbb{C}^{5}\longrightarrow\mathbb{C}^{5}$ given by
$$(y_{00'},y_{10'},y_{01'},y_{11'},t):=\Psi(z_{0},z_{1},z_{2},z_{3},z_{4})=\left(z_{0},z_{1},z_{2},z_{3},z_{4}-z_{0}z_{3}
-z_{1}z_{2}\right)
,$$
we have
$$\Psi_{*}\frac{\partial}{\partial{z_{0}}}=V_{00'},\qquad \Psi_{*}\frac{\partial}{\partial{z_{1}}}=V_{10'},$$
by expression of $V_{AA'}$ in (\ref{eq:Y}). Take $G_{A}=g_{A}\circ\Psi$, $A=0,1,$ and $F=f\circ \Psi$.
Then by pulling back, we need to solve
\begin{equation}\label{eq:poincare}
 \frac{\partial F}{\partial z_{0}}=G_{0},
\quad \frac{\partial F}{\partial z_{1}}=G_{1}.
\end{equation}
Under the condition $\frac{\partial G_{0}}{\partial z_{1}}=\frac{\partial G_{1}}{\partial z_{0}}$, it follows from Poincar\'e's lemma that (\ref{eq:poincare}) has a solution.
Therefore, $f=F\circ\Psi^{-1}$ is the solution to (\ref{eq:Y4}).
\end{proof}

\begin{lem}
Suppose $\gamma_{0}$ satisfy the equation (\ref{eq:compatible-}), i.e. $\Delta_{b}\gamma_{0}=0,$ then (\ref{eq:induct}) is solvable.
\end{lem}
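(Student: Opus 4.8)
The plan is to construct the Laurent coefficients $\gamma_i$ for all $i\in\mathbb{Z}$ by a two-sided induction starting from the given $\gamma_0$, applying Lemma \ref{Poin} at each step to solve the relevant overdetermined first-order system. Splitting the recursion (\ref{eq:induct}) into its cases $A=0$ and $A=1$, the passage from $\gamma_i$ to $\gamma_{i+1}$ amounts to finding a function with
\[
V_{00'}\gamma_{i+1}=V_{01'}\gamma_i,\qquad V_{10'}\gamma_{i+1}=V_{11'}\gamma_i,
\]
that is, to solving ${\rm d}_0\gamma_{i+1}=V_{01'}\gamma_i\cdot\theta^{00'}+V_{11'}\gamma_i\cdot\theta^{10'}$, which is exactly the situation treated in Lemma \ref{Poin}.

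For the upward step I would assume $\gamma_i\in\mathcal{O}(U)$ has already been produced with $\Delta_b\gamma_i=0$. The ${\rm d}_0$-closedness hypothesis of Lemma \ref{Poin} for the one-form $\omega=V_{01'}\gamma_i\cdot\theta^{00'}+V_{11'}\gamma_i\cdot\theta^{10'}$ reads $V_{10'}(V_{01'}\gamma_i)=V_{00'}(V_{11'}\gamma_i)$, i.e. $\Delta_b\gamma_i=0$, which is precisely the inductive hypothesis. Hence Lemma \ref{Poin} yields $\gamma_{i+1}\in\mathcal{O}(U)$ solving the displayed system, so (\ref{eq:induct}) holds at index $i$. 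To keep the induction going I must verify that $\gamma_{i+1}$ is again annihilated by the sub-Laplacian. Writing $\Delta_b=V_{11'}V_{00'}-V_{01'}V_{10'}$ (the two presentations of $\Delta_b$ agreeing because of the $2T$ terms in (\ref{eq:YYT})) and substituting the two relations just solved gives $\Delta_b\gamma_{i+1}=V_{11'}V_{01'}\gamma_i-V_{01'}V_{11'}\gamma_i=[V_{11'},V_{01'}]\gamma_i=0$, since this mixed bracket vanishes by (\ref{eq:YYT}).

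The downward step is entirely symmetric: given $\gamma_i$ with $\Delta_b\gamma_i=0$, I would invoke the ${\rm d}_1$-version of Lemma \ref{Poin} to solve $V_{01'}\gamma_{i-1}=V_{00'}\gamma_i$ and $V_{11'}\gamma_{i-1}=V_{10'}\gamma_i$; the required ${\rm d}_1$-closedness is once more $\Delta_b\gamma_i=0$ (this time in the form $V_{11'}V_{00'}\gamma_i=V_{01'}V_{10'}\gamma_i$), and harmonicity of the new coefficient follows from $\Delta_b\gamma_{i-1}=V_{00'}V_{10'}\gamma_i-V_{10'}V_{00'}\gamma_i=[V_{00'},V_{10'}]\gamma_i=0$. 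Since the base case $\Delta_b\gamma_0=0$ is exactly the hypothesis (\ref{eq:compatible-}), both inductions run freely and produce the whole family $\{\gamma_i\}_{i\in\mathbb{Z}}$ satisfying (\ref{eq:induct}).

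The only genuinely delicate point is the propagation of the harmonicity condition: Lemma \ref{Poin} needs $\Delta_b\gamma_i=0$ to build $\gamma_{i\pm1}$, so the whole scheme would break down unless each newly constructed coefficient automatically inherits the same equation. This is precisely where the Heisenberg commutation relations (\ref{eq:YYT}) are indispensable — the relevant mixed brackets $[V_{11'},V_{01'}]$ and $[V_{00'},V_{10'}]$ vanish, while the two nonzero brackets supply the $2T$ contributions that reconcile the two expressions for $\Delta_b$. Beyond this I expect no obstacle; in particular the lemma only asserts solvability of the recursion, namely the existence of each holomorphic coefficient $\gamma_i$ over the elementary set $U$, and not convergence of the series $\sum_i\gamma_i\zeta^{-i}$, so no analytic estimate on the coefficients is needed.
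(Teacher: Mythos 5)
Your proposal is correct and follows essentially the same route as the paper: a two-sided induction in $i$ in which each step is an application of Lemma \ref{Poin} to the one-form built from $\gamma_i$, with the required ${\rm d}_0$- or ${\rm d}_1$-closedness checked via the commutation relations (\ref{eq:YYT}). The only (harmless) difference is bookkeeping: you carry $\Delta_b\gamma_i=0$ as the inductive invariant and verify it propagates, whereas the paper carries the previously solved recursion relation and substitutes it directly into the closedness computation — the two are equivalent.
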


\begin{proof}
Inductively, for fixed $i=0,-1,\cdots$, assuming that there exists $\gamma_{i}$ and $\gamma_{i-1}$ satisfying
\begin{equation}\label{eq:orig}
     \left\{\begin{array}{l}
         V_{01'}\gamma_{i-1}=V_{00'}\gamma_{i},\\
         V_{11'}\gamma_{i-1}=V_{10'}\gamma_{i},
         \end{array}  \right.
  \end{equation}
we need to find $\gamma_{i-2}$ such that
 \begin{equation}\label{eq:Ward}
     \left\{\begin{array}{l}
         V_{01'}\gamma_{i-2}=V_{00'}\gamma_{i-1},\\
         V_{11'}\gamma_{i-2}=V_{10'}\gamma_{i-1}.
         \end{array}  \right.
  \end{equation}
 Denote
  \begin{equation*}
  \Lambda_{i-1}:=V_{00'}\gamma_{i-1}\theta^{01'}+V_{10'}\gamma_{i-1}\theta^{11'}.  \end{equation*}
Then $\Lambda_{i-1}$ is ${\rm d}_{1}$-closed, since
\begin{equation*}
\begin{split}
{\rm d}_{1}\Lambda_{i-1}&
=(V_{01'}V_{10'}-V_{11'}V_{00'})\gamma_{i-1}\theta^{01'}\wedge\theta^{11'}
=(V_{10'}V_{01'}-V_{00'}V_{11'})\gamma_{i-1}\theta^{01'}\wedge\theta^{11'}\\&
=(V_{10'}V_{00'}-V_{00'}V_{10'})\gamma_{i}\theta^{01'}\wedge\theta^{11'}
=0,
\end{split}
 \end{equation*}
by using $[V_{00'},V_{11'} ]=[V_{10'},V_{01'} ]=T$ in (\ref{eq:YYT}) in the second identity, (\ref{eq:orig}) and $[V_{00'},V_{10'} ]=0$ in the last identity.
It follows from Lemma \ref{Poin} that there exists $\gamma_{i-2}$ such that $d_{1}\gamma_{i-2}=\Lambda_{i-1}$, i.e. equation (\ref{eq:Ward}) is satisfied.
So solving the equations (\ref{eq:induct}) for $i=\cdots,-1,0$, is reduced to the solve the equation $d_{1}\Lambda_{0}=0$ by induction. But
\begin{equation*}\begin{split}
{\rm d}_{1}\Lambda_{0}=&\left(V_{01'}V_{10'}-V_{11'}V_{00'}\right)\gamma_{0}\theta^{01'}\wedge\theta^{11'}\\
=&\left(V_{10'}V_{01'}-V_{00'}V_{11'}\right)\gamma_{0}\theta^{01'}\wedge\theta^{11'}\\
=&-\Delta_{b}\gamma_{0}\theta^{01'}\wedge\theta^{11'}=0.
\end{split}\end{equation*}
Hence, (\ref{eq:induct}) is solvable for $i=\cdots,-1,0$.

On the other hand, for $i=0,1,\cdots$,
if there exists $\gamma_{i},\gamma_{i+1}$ such that
\begin{equation}\label{eq:ori}
     \left\{\begin{array}{l}
         V_{01'}\gamma_{i}=V_{00'}\gamma_{i+1},\\
         V_{11'}\gamma_{i}=V_{10'}\gamma_{i+1},
         \end{array}  \right.
  \end{equation}
we need to show
\begin{equation}\label{eq:Ward-A1}
     \left\{\begin{array}{l}
         V_{00'}\gamma_{i+2}=V_{01'}\gamma_{i+1},\\
         V_{10'}\gamma_{i+2}=V_{11'}\gamma_{i+1}
         \end{array}  \right.
  \end{equation}
has a solution $\gamma_{i+2}$. Denote
\begin{equation*}
  \widetilde{\Lambda}_{i+1}:=V_{01'}\gamma_{i+1}\theta^{00'}+V_{11'}\gamma_{i+1}\theta^{10'}.
  \end{equation*}
As above $\Lambda_{i+1}$ is $d_{0}$-closed, since
\begin{equation*}
\begin{split}
d_{0}\widetilde{\Lambda}_{i+1}&
=(V_{00'}V_{11'}-V_{10'}V_{01'})\gamma_{i+1}\theta^{00'}\wedge\theta^{10'}=(V_{11'}V_{00'}-V_{01'}V_{10'})\gamma_{i+1}\theta^{00'}\wedge\theta^{10'}\\&
=(V_{11'}V_{01'}-V_{01'}V_{11'})\gamma_{i}\theta^{00'}\wedge\theta^{10'}
=0,
\end{split}
 \end{equation*}
by using $[V_{00'},V_{11'} ]=[V_{10'},V_{01'} ]=T$ in the second identity, (\ref{eq:ori}) and $[V_{11'},V_{01'} ]=0$
in the third identity.
By Lemma \ref{Poin} again, there exists $\gamma_{i+2}$ such that $d_{0}\gamma_{i+2}=\widetilde{\Lambda}_{i+1}$,
i.e. the equation (\ref{eq:Ward-A1}) is solvable. The equation (\ref{eq:induct}), for $i=0,1,\cdots$,
is also reduced to the equation
$$0={\rm d}_{0}\widetilde{\Lambda}_{0}=(V_{00'}V_{11'}-V_{10'}V_{01'})\gamma_{0}\theta^{00'}\wedge\theta^{10'}=\Delta_{b}\gamma_{0}\theta^{00'}\wedge\theta^{10'}.$$
It holds since $\Delta_{b}\gamma_{0}=0$. The lemma is proved.
\end{proof}

{\it Proof of Theorem \ref{ASDA}. (1)}
As in the classical case (cf. e.g. \cite[Example 10.1.2]{Mason1}), we take the Birkhoff decomposition
\begin{equation}\label{cur111000}
  G(\mathbf{y},t,\zeta)=\left(\begin{array} {cc} \zeta&\gamma(\mathbf{y},t,\zeta)\\0&\zeta^{-1} \end{array}\right)=\widetilde{ f}^{-1}f,
 \end{equation}
with \begin{equation}\label{cure112233}\begin{split}
  f&=\frac{1}{\sqrt{\varphi}}\left(\begin{array} {cc} \zeta&\varphi+\gamma_{+}\\-1&-\zeta^{-1}\gamma_{+} \end{array}\right)
  \in\mathcal{O}(U\times \mathbb{C}),\\
\widetilde{ f}&=\frac{1}{\sqrt{\varphi}}\left(\begin{array} {cc} 1&-{\zeta}\gamma_{-}\\-{\zeta}^{-1}&\varphi+\gamma_{-} \end{array}\right)
   \in\mathcal{O}(U\times \mathbb{C}),
\end{split} \end{equation}
where $U\times\mathbb{C}$ and
$U\times\mathbb{C}$ are local coordinate charts of $U\times\mathbb{C}P^{1}$.
Their inverses are
\begin{equation}\label{cure1122330}\begin{split}
  f^{-1}&=\sqrt{\varphi}\left(\begin{array} {cc} -\zeta^{-1}\varphi^{-1}\gamma_{+}&-1-\gamma_{+}\varphi^{-1}\\ \varphi^{-1}&\zeta\varphi^{-1} \end{array}\right),\\
\widetilde{ f}^{-1}&=\sqrt{\varphi}\left(\begin{array} {cc} 1+\varphi^{-1}\gamma_{-}&\zeta\gamma_{-}\varphi^{-1}\\ \varphi^{-1}\zeta^{-1}&\varphi^{-1} \end{array}\right),
 \end{split}\end{equation}
respectively. It is direct to check (\ref{cur111000}) by $\widetilde{ f}^{-1}$ and $f$ in (\ref{cure112233})-(\ref{cure1122330}), if $\gamma_{\pm}$ satisfy (\ref{cure112233000}) and $\gamma_{0}=\varphi$.
Then
\begin{equation}
    h: =f|_{\zeta=0}=\frac{1}{\sqrt{\varphi}}\left(\begin{array} {cc} 0&\varphi\\-1&-\gamma_{-1} \end{array}\right),\qquad
    \widetilde{h}: =\widetilde{ f}|_{\widetilde{\zeta}=0}=\frac{1}{\sqrt{\varphi}}\left(\begin{array} {cc} 1&-\gamma_{1}\\0&\varphi \end{array}\right),
 \end{equation}
  By (\ref{d-1-2}) and (\ref{eq:induct}), we have
${\rm d}_{1}\gamma_{-1}=V_{00'}\varphi\cdot\theta^{01'}+V_{10'}\varphi\cdot\theta^{11'}$ and ${\rm d}_{0}\gamma_{1}=V_{01'}\varphi\cdot\theta^{00'}+V_{11'}\varphi\cdot\theta^{10'}$, and
\begin{equation}
    h^{-1}=\sqrt{\varphi}\left(\begin{array} {cc} -\varphi^{-1}\gamma_{-1}&-1\\ \varphi^{-1}&0 \end{array}\right),\qquad
    \widetilde{h}^{-1}=\sqrt{\varphi}\left(\begin{array} {cc} 1&\gamma_{1}\varphi^{-1}\\0&\varphi^{-1} \end{array}\right).
 \end{equation}
It is direct to check that
 \begin{equation}\label{ASDA1}    \begin{split}
     \Phi &
     =-{\rm d}_{1}{h} \cdot h^{-1}-{\rm d}_{0}{\widetilde{h}} \cdot \widetilde{h}^{-1}+\Phi_{T}\theta\\&
     =\left[\begin{array} {cc}
     \frac{1}{2}(-{\rm d}_{1}\ln{\varphi}+{\rm d}_{0}\ln{\varphi})
     & V_{01'}(\ln\varphi)\theta^{00'}+V_{11'}(\ln\varphi)\theta^{10'}\\
     V_{00'}(\ln\varphi)\theta^{01'}+V_{10'}(\ln\varphi)\theta^{11'}
     &\  \frac{1}{2}({\rm d}_{1}\ln{\varphi}-{\rm d}_{0} \ln{\varphi})\end{array}\right]+\Phi_{T}\theta,
\end{split}     \end{equation}
which is ASD by (\ref{eq:asd-eq4}) and  Theorem \ref{pwc}.

(2)\quad
Noting that $V_{AA'}(y_{BB'})=\delta_{AB}\delta_{A'B'}$ and $\left\|\mathbf{ y}\right\|^{2}=y_{00'}y_{11'}-y_{10'}y_{01'}$, we have
 $$V_{00'}\left\|\mathbf{ y}\right\|^{2}=y_{11'},\quad V_{01'}\left\|\mathbf{ y}\right\|^{2}=-y_{10'},\quad
 V_{10'}\left\|\mathbf{ y}\right\|^{2}=-y_{01'},\quad V_{11'}\left\|\mathbf{ y}\right\|^{2}=y_{00'}.$$
 It's direct to check that
\begin{equation*}\begin{split}
&V_{00'}(\left\|\mathbf{ y}\right\|^{4}-t^{2})=2y_{11'}(\left\|\mathbf{ y}\right\|^{2}+t),\\
&V_{01'}(\left\|\mathbf{ y}\right\|^{4}-t^{2})=-2y_{10'}(\left\|\mathbf{ y}\right\|^{2}+t),\\
&V_{10'}(\left\|\mathbf{ y}\right\|^{4}-t^{2})=2y_{01'}(-\left\|\mathbf{ y}\right\|^{2}+t),\\
&V_{11'}(\left\|\mathbf{ y}\right\|^{4}-t^{2})=2y_{00'}(\left\|\mathbf{ y}\right\|^{2}-t),\\
\end{split}\end{equation*}
and
\begin{equation*}\begin{split}
&V_{10'}V_{01'}(\left\|\mathbf{ y}\right\|^{4}-t^{2})=-2\left\|\mathbf{ y}\right\|^{2}-2t+4y_{10'}y_{01'},\\
&V_{00'}V_{11'}(\left\|\mathbf{ y}\right\|^{4}-t^{2})=2\left\|\mathbf{ y}\right\|^{2}-2t+4y_{00'}y_{11'}.\\
\end{split}\end{equation*}
Consequently, for $\varphi=\frac{1}{\left\|\mathbf{ y}\right\|^{4}-t^{2}}$, we have
\begin{equation*}\begin{split}
V_{10'}V_{01'}(\varphi)&=-\frac{V_{10'}V_{01'}(\left\|\mathbf{ y}\right\|^{4}-t^{2})}{(\left\|\mathbf{ y}\right\|^{4}-t^{2})^{2}}
+2\frac{V_{01'}(\left\|\mathbf{ y}\right\|^{4}-t^{2})V_{10'}(\left\|\mathbf{ y}\right\|^{4}-t^{2})}{(\left\|\mathbf{ y}\right\|^{4}-t^{2})^{3}}\\
&=\frac{2y_{00'}y_{11'}+2y_{10'}y_{01'}+2t}{(\left\|\mathbf{ y}\right\|^{4}-t^{2})^{2}},\\
V_{00'}V_{11'}(\varphi)&=-\frac{V_{00'}V_{11'}(\left\|\mathbf{ y}\right\|^{4}-t^{2})}{(\left\|\mathbf{ y}\right\|^{4}-t^{2})^{2}}
+2\frac{V_{11'}(\left\|\mathbf{ y}\right\|^{4}-t^{2})V_{00'}(\left\|\mathbf{ y}\right\|^{4}-t^{2})}{(\left\|\mathbf{ y}\right\|^{4}-t^{2})^{3}}\\
&=\frac{2y_{00'}y_{11'}+2y_{10'}y_{01'}+2t}{(\left\|\mathbf{ y}\right\|^{4}-t^{2})^{2}}.\\
\end{split}\end{equation*}
Therefore $V_{10'}V_{01'}(\varphi)=V_{00'}V_{11'}(\varphi),$ i.e. $\varphi$ is the solution to  (\ref{eq:compatible-}).\qed

\section{The Real Case }
\subsection{SD and ASD horizontal form on real Heisenberg group $\mathscr H^{\mathbb{R}}$ }
The real Heisenberg group is a flat model of contact manifolds. Let us compare the ASD connection restricted to the real $5$D Heisenberg group with contact instantons on $5$D contact manifolds introduced by physicists
(cf.\cite{MR3042942} \cite{MR2967134} \cite{Wolf1}).
Take a metric on $\mathscr H^{\mathbb{R}}$ as
$g=\rm{d}y_{1}^{2}+\rm{d}y_{2}^{2}+\rm{d}y_{3}^{2}+\rm{d}y_{4}^{2}+
\rm{d}t^{2}.$
The relevant volume element on $\mathscr H^{\mathbb{R}}$ is
$\rm{d}V:=\rm{d}y_{1}\wedge \rm{d}y_{2}\wedge \rm{d}y_{3}\wedge \rm{d}y_{4}\wedge \rm{d}t.$
For briefness, we denote $\rm{d}y_{5}:=\rm{d}t$. The {\it Hodge star} $*$ with respect to the metric $g$ on $\mathscr H^{\mathbb{R}}$ is given by $*:\Omega^{k}(\mathscr H^{\mathbb{R}})\longrightarrow \Omega^{5-k}(\mathscr H^{\mathbb{R}})$, $ w\longmapsto *w$, such that
\begin{equation}\label{Hodg1}
w\wedge*w=\rm{d}V.
\end{equation}
Namely, we have
$*(\rm{d}y_{i_{1}}\wedge\cdots\wedge\rm{d}y_{i_{k}})=\varepsilon_{i_{1}\cdots i_{k}j_{1}\cdots j_{5-k}}\rm{d}y_{j_{1}}\wedge\cdots\rm{d}y_{j_{5-k}},$
where $\varepsilon_{i_{1}\cdots i_{k}j_{1}\cdots j_{5-k}}$ is the permutation index from $\{i_{1},\cdots, i_{k},j_{1},\cdots, j_{5-k}\}$ to $\{1,\cdots,5\}$.
 Denote by $\Omega^{2}_{H}(\mathscr H^{\mathbb{R}})$ the space of horizontal two forms on $\mathscr H^{\mathbb{R}}$, i.e. $\iota_{T}\omega=0$ for $\omega\in\Omega^{2}_{H}(\mathscr H^{\mathbb{R}})$, and $\Omega^{2}_{V}(\mathscr H^{\mathbb{R}})$ the space of vertical
two forms on $\mathscr H^{\mathbb{R}}$. We have the decomposition
\begin{equation}
\Omega^{2}(\mathscr H^{\mathbb{R}})=\Omega^{2}_{H}(\mathscr H^{\mathbb{R}})\oplus\Omega^{2}_{V}(\mathscr H^{\mathbb{R}}),\qquad
\omega=\omega_{H}+\omega_{V},
\end{equation}
where $\omega_{H}=\iota_{T}({\rm dt}\wedge\omega)$ and $\omega_{V}={\rm dt}\wedge(\iota_{T}\omega)$.
Denote by $\Omega^{2+}_{H}(\mathscr H^{\mathbb{R}})$ $\left(\Omega^{2-}_{H}(\mathscr H^{\mathbb{R}})\right)$ the space of all horizontal (anti-)self-dual spaces of two forms on $\mathscr H$, of which elements satisfy
\begin{equation}\label{hsdu}\iota_{T}\ast\omega=\omega\quad (\iota_{T}\ast\omega=-\omega).\end{equation}
Then we have the decomposition
\begin{equation}
\Omega^{2}_{H}(\mathscr H^{\mathbb{R}})=\Omega^{2+}_{H}(\mathscr H^{\mathbb{R}})\oplus\Omega^{2-}_{H}(\mathscr H^{\mathbb{R}})\qquad
\omega_{H}=\omega^{+}_{H}+\omega^{-}_{H},
\end{equation}
where $\omega^{+}_{H}=\frac{1}{2}(1+\iota_{T}\ast)\omega_{H}\in\Omega^{2+}_{H}(\mathscr H)$ and $\omega^{-}_{H}=\frac{1}{2}(1-\iota_{T}\ast)\omega_{H}\in\Omega^{2-}_{H}(\mathscr H^{\mathbb{R}})$. In fact, noting that $\iota_{T}\ast\iota_{T}\ast=id,$ we have $\iota_{T}\ast\omega^{+}_{H}=\frac{1}{2}\iota_{T}\ast(1+\iota_{T}\ast)\omega_{H}=\frac{1}{2}(\iota_{T}\ast+1)\omega_{H}=\omega^{+}_{H},$ i.e. $\omega^{+}_{H}\in\Omega^{2+}_{H}(\mathscr H)$. And
$\iota_{T}\ast\omega^{-}_{H}=\frac{1}{2}\iota_{T}\ast(1-\iota_{T}\ast)\omega_{H}=\frac{1}{2}(\iota_{T}\ast-1)\omega_{H}=-\omega^{-}_{H},$ i.e. $\omega^{-}_{H}\in\Omega^{2-}_{H}(\mathscr H)$.

\subsection{Contact instantons on $\mathscr H^{\mathbb{R}}$}

The double fibration over real Heisenberg group $\mathscr H^{\mathbb{R}}$ becomes
\begin{equation}  \label{rfibration}      \xy 0;/r.22pc/:
 (0,20)*+{\mathcal{F}^{\mathbb{R}} }="1";
(-20,0)*+{  \mathcal{P}^{\mathbb{R}} }="2";
(20,0)*+{ \mathscr H^{\mathbb{R}}}="3";
{\ar@{->}_{\eta}"1"; "2"};
{\ar@{->}^{\tau} "1"; "3"};
\endxy
 \end{equation}
 where $\mathcal{F^{\mathbb{R}}}=\mathscr H^{\mathbb{R}}\times \mathbb{C}P^1$ and $\mathcal{P^{\mathbb{R}}}=\eta\left(\mathscr H^{\mathbb{R}}\times\mathbb{C}P^{1}\right)$.
\begin{cor}
\label{prop:YX1}   The fiber of the mapping $\eta:\mathbb{R}^{5}\times \mathbb{C}\rightarrow \mathcal{P}^{\mathbb{R}}$ given  by
\begin{equation}\label{eq:psi21}\begin{split}
\omega_{0}:=\eta_{0}(\mathbf{y}_{0'}, \mathbf{ y}_{1'},\mathbf{t},\zeta)&=y_{1}+\mathbf{i}y_{2}+\zeta (-y_{3}+\mathbf{i}y_{4}),\\
\omega_{1}:=\eta_{1}(\mathbf{y}_{0'}, \mathbf{ y}_{1'},\mathbf{t},\zeta)&=y_{3}+\mathbf{i}y_{4}+\zeta (y_{1}-\mathbf{i}y_{2}),\\
\omega_{2}:=\eta_{2}(\mathbf{y}_{0'}, \mathbf{ y}_{1'},\mathbf{t},\zeta)&
=-\mathbf{i}s-2\zeta(-y_{3}+\mathbf{i}y_{4})(y_{1}-\mathbf{i}y_{2})- y_{1}^2 - y_{2}^2
 +  y_{3}^2+ y_{4}^2,\\
\omega_{3}:=\eta_{3}(\mathbf{y}_{0'}, \mathbf{ y}_{1'},\mathbf{t},\zeta)&= \zeta,\\
\end{split}\end{equation}
 is an abelian subgroup of dimension $4$, whose tangential space is spanned by $\{V_{0},V_{1}\}$. We denote $\hat{y}=(\omega_{0},\omega_{1},\omega_{2},\omega_{3})$.
\end{cor}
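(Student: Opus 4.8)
The plan is to deduce this Corollary from the Proposition of Section 2 by passing to real coordinates through the imbedding (\ref{eq:C-L}). First I would substitute (\ref{eq:C-L}) into the defining formula (\ref{eq:psi}) of $\eta$: using (\ref{currr000}) together with $y_{00'}y_{11'}=y_1^2+y_2^2$ and $y_{10'}y_{01'}=-y_3^2-y_4^2$, the three components $\omega_0,\omega_1,\omega_2$ collapse exactly to (\ref{eq:psi21}), so that (\ref{eq:psi21}) is nothing but $\eta$ written in real coordinates. By the Proposition the fibre $\tau\circ\eta^{-1}(\hat y)$ is then the $\alpha$-plane (\ref{eq:lienar eq0}), parametrised by the two complex parameters $s_0,s_1$, hence of real dimension four; and $V_0^\zeta,V_1^\zeta$, equivalently $V_0,V_1$ from (\ref{keyvf}), are tangent to it. This already yields the dimension and the tangential-space assertions.

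It remains to establish the group-theoretic content. I would argue by left-invariance: since each $V_{AA'}$ is left-invariant, so are $V_0$ and $V_1$, whence the rank-two distribution they span is left-invariant and every $\alpha$-plane is a left-translate of the integral manifold $\Sigma$ through the identity. Because $[V_0,V_1]=0$ --- this drops out of (\ref{eq:YYT}), the brackets $[V_{00'},V_{11'}]$ and $[V_{10'},V_{01'}]$ cancelling in the expansion of $[V_0,V_1]$ --- the span of $V_0,V_1$ is an abelian Lie subalgebra, so $\Sigma$ is a connected abelian Lie subgroup and the general fibre is a coset of it.

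To make the subgroup structure fully explicit, and to confirm commutativity at the group (not merely the Lie-algebra) level, I would verify closure directly on $\Sigma$. Putting $\omega_0=\omega_1=\omega_2=0$ in (\ref{eq:lienar eq0}) identifies $\Sigma$ with the set $\{\mathbf{y}_{0'}=-\zeta\mathbf{y}_{1'},\,t=0\}$. For two such points the product law (\ref{eq:w-multiplication2}) gives $t''=\langle\mathbf{y}_{0'},\mathbf{y}'_{1'}\rangle-\langle\mathbf{y}_{1'},\mathbf{y}'_{0'}\rangle$, and substituting $\mathbf{y}_{0'}=-\zeta\mathbf{y}_{1'}$ shows both inner products equal $-\zeta(y_{01'}y'_{11'}+y'_{01'}y_{11'})$; hence $t''=0$ and the product again lies in $\Sigma$. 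The same computation is symmetric in the two factors, so the restricted group law is ordinary addition and is abelian.

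The main point to watch is the complexification: $V_0,V_1$ are complex combinations of the real left-invariant fields on $\mathscr H^{\mathbb{R}}$, so the four-dimensional integral manifold is most naturally seen inside $\mathscr H$ through (\ref{eq:C-L}) rather than literally inside $\mathbb{R}^5$, and the real multiplication (\ref{eq:w-multiplication-r}) should be replaced by its complex form (\ref{eq:w-multiplication2}). The cleanest route is therefore to carry out all the Lie-theoretic bookkeeping in $\mathscr H$, where the Proposition and (\ref{eq:YYT}) already live, and only to record (\ref{eq:psi21}) as the real-coordinate form of the map; once this is set up, the remaining verifications --- tangency of $V_0,V_1$ and the vanishing of the cocycle term on $\Sigma$ --- are routine.
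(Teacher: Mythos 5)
Your proposal is correct and follows exactly the route the paper intends: the paper gives no separate proof of this Corollary, leaving it as an immediate consequence of the Proposition of Section~2 applied through the real imbedding (\ref{eq:C-L}), which is precisely your first two steps (and your substitution reproducing (\ref{eq:psi21}) checks out). Your additional verification that the fibre through the identity is closed under the group law (\ref{eq:w-multiplication2}), and your remark that a general fibre is a left coset living in $\mathscr H$ rather than in $\mathscr H^{\mathbb{R}}$, are correct and in fact make the statement more precise than the paper's own formulation.
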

\begin{prop}
For x and y $\in$ $\mathbb{R}^{5}$, if $\hat{x}\bigcap\hat{y}\neq\emptyset$, then we have $x=y$. So $\mathcal{P}^{\mathbb{R}}$ is the trivial $\mathbb{C}P^{1}$ bundle over $\mathbb{R}^{5}$.
\end{prop}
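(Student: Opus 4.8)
The plan is to reduce an intersection $\hat{x}\cap\hat{y}\neq\emptyset$ to a system of equations in a single spectral parameter $\zeta$, using the explicit fibre parametrization of Corollary~\ref{prop:YX1}, and then to exploit the reality conditions built into the embedding \eqref{eq:C-L} to produce a strictly positive factor $1+|\zeta|^2$ that forces the coordinate differences to vanish.

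First I would observe that a common point $Z\in\hat{x}\cap\hat{y}$ has a well-defined last coordinate $\omega_{3}$, which in the chart $W$ equals $\zeta\in\mathbb{C}$; hence $Z$ is reached from $x$ and from $y$ at the \emph{same} value of $\zeta$. Assuming first that $\zeta\in\mathbb{C}$, set $a=y_{1}+\mathbf{i}y_{2}$ and $c=y_{3}+\mathbf{i}y_{4}$ (and $a',c'$ for the coordinates of $y$). Since $-y_{3}+\mathbf{i}y_{4}=-\bar{c}$ and $y_{1}-\mathbf{i}y_{2}=\bar{a}$, the expressions in \eqref{eq:psi21} read $\omega_{0}=a-\zeta\bar{c}$ and $\omega_{1}=c+\zeta\bar{a}$. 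Writing $\alpha=a-a'$ and $\gamma=c-c'$, the equalities $\omega_{0}(x,\zeta)=\omega_{0}(y,\zeta)$ and $\omega_{1}(x,\zeta)=\omega_{1}(y,\zeta)$ become $\alpha=\zeta\bar{\gamma}$ and $\gamma=-\zeta\bar{\alpha}$. Substituting the second into the first gives $\alpha=-|\zeta|^{2}\alpha$, i.e. $(1+|\zeta|^{2})\alpha=0$. Since $1+|\zeta|^{2}\geq1>0$, I conclude $\alpha=0$ and hence $\gamma=0$, so the first four real coordinates of $x$ and $y$ agree. The remaining equality $\omega_{2}(x,\zeta)=\omega_{2}(y,\zeta)$ then collapses: its nonlinear terms depend only on $(y_{1},\dots,y_{4})$ and $\zeta$, which now coincide, so it reduces to $-\mathbf{i}s=-\mathbf{i}s'$, giving $s=s'$ and therefore $x=y$. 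The single boundary case $\zeta=\infty$ I would dispose of by the identical computation in the $\widetilde{\eta}$ chart at $\widetilde{\zeta}=0$, where the reality relations $y_{11'}=\bar{y}_{00'}$ and $y_{10'}=-\bar{y}_{01'}$ again force all coordinate differences to vanish.

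The only point that really requires care is the emergence of the strictly positive factor $1+|\zeta|^{2}$: this is precisely where the reality conditions of \eqref{eq:C-L} enter, converting the complex expression (which over $\mathbb{C}^{5}$ can vanish along null directions, so that distinct complex fibres genuinely meet) into a positive one, whence distinct real fibres are always disjoint. Finally, for the bundle statement I would note that the argument above shows $\eta$ is injective on $\mathcal{F}^{\mathbb{R}}=\mathscr{H}^{\mathbb{R}}\times\mathbb{C}P^{1}$ — distinct $\zeta$ give distinct $\omega_{3}$ within one fibre, and distinct $x$ give disjoint fibres — so $\eta$ is a bijection onto $\mathcal{P}^{\mathbb{R}}$. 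Transporting the product structure of $\mathcal{F}^{\mathbb{R}}$ through $\eta^{-1}$ exhibits $\tau\circ\eta^{-1}\colon\mathcal{P}^{\mathbb{R}}\to\mathbb{R}^{5}$ as the trivial $\mathbb{C}P^{1}$-bundle, with fibres exactly the $\hat{x}$.
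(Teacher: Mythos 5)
Your proof is correct and follows essentially the same route as the paper: both reduce the intersection condition to the linear system in $\zeta$ coming from the fibre parametrization and then use the reality conditions of \eqref{eq:C-L} to force the coordinate differences to vanish --- you by direct elimination producing the strictly positive factor $1+|\zeta|^{2}$, the paper by observing that the determinant of the $2\times2$ difference matrix equals $\sum_{k=1}^{4}(x_{k}-y_{k})^{2}$. These are the same positivity phenomenon, and the remaining steps (recovering $s=s'$ from the third equation and deducing triviality of the bundle from injectivity of $\eta$) are handled identically.
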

\begin{proof}
If we write \begin{equation}\label{sinlonly}\begin{split}
&x=(x_{00'},x_{10'},x_{01'},x_{11'},t_{1})=(x_{1}+\textbf{i}x_{2},x_{3}+\textbf{i}x_{4},-x_{3}+\textbf{i}x_{4},x_{1}-\textbf{i}x_{2},-\textbf{i}s_{1}),\\
&y=(y_{00'},y_{10'},y_{01'},y_{11'},t_{2})=(y_{1}+\textbf{i}y_{2},y_{3}+\textbf{i}y_{4},-y_{3}+\textbf{i}y_{4},y_{1}-\textbf{i}y_{2},-\textbf{i}s_{2}),
\end{split}\end{equation}by the embedding (\ref{eq:C-L}) of $\mathscr H^{\mathbb{R}}$ into $\mathscr H$,
then we have
\begin{equation}\label{sinlonly11}\begin{split}
&\hat{x}=\eta\circ\tau^{-1}(x)=\left(x_{00'}+\zeta x_{01'},x_{10'}+\zeta x_{11'},-\textbf{i}s_{1}-\langle \mathbf{x}_{0'}+\zeta\mathbf{x}_{1'},\mathbf{x}_{1'}\rangle,\zeta\right) ,\\
&\hat{y}=\eta\circ\tau^{-1}(y)=\left(y_{00'}+\zeta y_{01'},y_{10'}+\zeta y_{11'},-\textbf{i}s_{2}-\langle \mathbf{y}_{0'}+\zeta\mathbf{y}_{1'},\mathbf{y}_{1'}\rangle,\zeta\right) ,\\
\end{split}\end{equation}
by (\ref{eq:psi}). If $\hat{x}\bigcap\hat{y}\neq\emptyset$, then there exists $\zeta$ such that     equations
\begin{equation}\label{eq:linear eq3}
     \left\{\begin{array}{l}
x_{00'}+\zeta x_{01'}=y_{00'}+\zeta y_{01'},\\
x_{10'}+\zeta x_{11'}=y_{10'}+\zeta y_{11'},\\
-\textbf{i}s_{1}-\langle \mathbf{x}_{0'}+\zeta\mathbf{x}_{1'},\mathbf{x}_{1'}\rangle=-\textbf{i}s_{2}-\langle \mathbf{y}_{0'}+\zeta\mathbf{y}_{1'},\mathbf{y}_{1'}\rangle
         \end{array} \right.
  \end{equation}
must have a solution. So      equations
 \begin{equation*}
\begin{pmatrix}
x_{00'}-y_{00'}&x_{01'}-y_{01'}\\
x_{10'}-y_{10'}&x_{11'}-y_{11'}
\end{pmatrix}
\begin{pmatrix}
a_{1}\\
a_{2}
\end{pmatrix}=0
\end{equation*}
has a nontrivial solution. Consequently, its coefficient matrix is singular, i.e.
\begin{equation*}\begin{split}
0&=\left|\begin{array}{cccc}
x_{00'}-y_{00'}&x_{01'}-y_{01'}\\
x_{10'}-y_{10'}&x_{11'}-y_{11'}
\end{array}\right|
=\left|\begin{array}{cccc}
x_{1}-y_{1}+\textbf{i}x_{2}-\textbf{i}y_{2}&-x_{3}+y_{3}+\textbf{i}x_{4}-\textbf{i}y_{4}\\
x_{3}-y_{3}+\textbf{i}x_{4}-\textbf{i}y_{4}&x_{1}-y_{1}-\textbf{i}x_{2}+\textbf{i}y_{2}
\end{array}\right|\\
&=(x_{1}-y_{1})^{2}+(x_{2}-y_{2})^{2}+(x_{3}-y_{3})^{2}+(x_{4}-y_{4})^{2},
\end{split}
\end{equation*}
i.e., $x_{k}=y_{k}$ for $k=1,\cdots,4$. So $s_{1}=s_{2}$ in (\ref{sinlonly}) by the third equation of (\ref{eq:linear eq3}). This implies $\eta$ in (\ref{rfibration}) is one to one, and so a diffeomorphism. Therefore, $\mathcal{P}^{\mathbb{R}}$ is a topologically  trivial $\mathbb{C}P^{1}$ bundle over $\mathbb{R}^{5}$.
\end{proof}

Recall the real embedding (\ref{eq:C-L}) of $\mathscr H^{\mathbb{R}}$ into $\mathscr H$.
 Then we have the left invariant complex vector field over $\mathscr H^{\mathbb{R}}$
 \begin{equation} \label{eq:realY}\begin{split}
V_{00'} &=\frac{1}{2}(\partial_{y_{1}}-\textbf{i}\partial_{y_{2}})-\textbf{i}{(y_{1}-\textbf{i}y_{2})}\partial_{s},\quad
V_{01'} =\frac{1}{2}(-\partial_{y_{3}}-\textbf{i}\partial_{y_{4}})+\textbf{i}{(y_{3}+\textbf{i}y_{4})}\partial_{s},\\
V_{10'} &=\frac{1}{2}(\partial_{y_{3}}-\textbf{i}\partial_{y_{4}})+\textbf{i}{(y_{3}-\textbf{i}y_{4})}\partial_{s},\quad
V_{11'} =\frac{1}{2}{(\partial_{y_{1}}+\textbf{i}\partial_{y_{2}})}+\textbf{i}{(y_{1}+\textbf{i}y_{2})}\partial_{s},\qquad
T =\textbf{i}\partial{s},
\end{split}\end{equation}
and the relevant dual forms becomes
\begin{equation}\label{rform}
\begin{split}&
\theta^{00'}={\rm{d}}y_{1}+\textbf{i}{\rm{d}}y_{2},\quad\theta^{01'}=-{\rm{d}}y_{3}+\textbf{i}{\rm{d}}y_{4},\quad
\theta^{10'}={\rm{d}}y_{3}+\textbf{i}{\rm{d}}y_{4},\quad
\theta^{11'}={\rm{d}}y_{1}-\textbf{i}{\rm{d}}y_{2},\\&
\theta=-\textbf{i}{\rm{d}}s+2\textbf{i}(y_{1}{\rm{d}}y_{2}-y_{2}{\rm{d}}y_{1}+y_{4}{\rm{d}}y_{3}-y_{3}{\rm{d}}y_{4}).
\end{split}
\end{equation}

\begin{prop}
The space of horizontal self-dual $2$-forms on $\mathscr H^{\mathbb{R}}$ is spanned by $\{S^{0'0'},S^{0'1'}$, $S^{1'1'}\}$ with
\begin{equation}
S^{0'0'}=\theta^{00'}\wedge\theta^{10'},\quad
S^{0'1'}=\theta^{00'}\wedge\theta^{11'}-\theta^{10'}\wedge\theta^{01'},\quad
S^{1'1'}=\theta^{01'}\wedge\theta^{11'}.
\end{equation}
The space of horizontal anti-self-dual 2-forms is spanned by $\{S^{00},S^{01},S^{11}\}$ with
\begin{equation}
S^{00}=\theta^{00'}\wedge\theta^{01'},\quad
S^{01}=\theta^{00'}\wedge\theta^{11'}+\theta^{10'}\wedge\theta^{01'},\quad
S^{11}=\theta^{10'}\wedge\theta^{11'}.
\end{equation}
\end{prop}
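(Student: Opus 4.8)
The plan is to reduce the whole statement to the standard Hodge decomposition of $2$-forms on Euclidean $\mathbb{R}^5$, exploiting that in the real coordinates of (\ref{eq:C-L}) the Reeb field is simply $T=\partial_t=\partial_{y_5}$ and the metric $g$ is flat. First I would record the six horizontal basic $2$-forms $dy_i\wedge dy_j$ with $1\le i<j\le 4$. These span $\Omega^2_H(\mathscr H^{\mathbb R})$, because the horizontal cotangent space is $\mathrm{span}\{dy_1,\dots,dy_4\}=\mathrm{span}\{\theta^{AA'}\}$ by (\ref{rform}), so the six wedges $\theta^{AA'}\wedge\theta^{BB'}$ and the six $dy_i\wedge dy_j$ span the same $6$-dimensional space.

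Next I would compute the operator $J:=\iota_T\ast$ on each generator. Since $g$ is Euclidean, the explicit $\varepsilon$-formula for $\ast$ recorded in the text gives $\ast(dy_i\wedge dy_j)=\pm\,dy_k\wedge dy_l\wedge dy_5$, where $\{k,l\}$ is the complement of $\{i,j\}$ in $\{1,2,3,4\}$ and the sign is the permutation sign of $(i,j,k,l,5)$; contracting with $T=\partial_{y_5}$ then strips off $dy_5$ and returns $\pm\,dy_k\wedge dy_l$. Running through all six pairs yields $J(dy_1\wedge dy_2)=dy_3\wedge dy_4$, $J(dy_1\wedge dy_3)=-dy_2\wedge dy_4$, $J(dy_1\wedge dy_4)=dy_2\wedge dy_3$, together with the three companion identities $J(dy_3\wedge dy_4)=dy_1\wedge dy_2$, $J(dy_2\wedge dy_4)=-dy_1\wedge dy_3$, $J(dy_2\wedge dy_3)=dy_1\wedge dy_4$. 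Since $\iota_T\ast\iota_T\ast=\mathrm{id}$ (already observed in the text), $J$ is an involution on $\Omega^2_H$, and the spaces $\Omega^{2+}_H$ and $\Omega^{2-}_H$ defined by (\ref{hsdu}) are precisely its $+1$- and $-1$-eigenspaces.

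Reading the eigenvectors off these six identities, the $+1$-eigenspace is spanned by $dy_1\wedge dy_2+dy_3\wedge dy_4$, $dy_1\wedge dy_3-dy_2\wedge dy_4$, $dy_1\wedge dy_4+dy_2\wedge dy_3$, while the $-1$-eigenspace is spanned by $dy_1\wedge dy_2-dy_3\wedge dy_4$, $dy_1\wedge dy_3+dy_2\wedge dy_4$, $dy_1\wedge dy_4-dy_2\wedge dy_3$. It then remains to expand the six forms $S^{A'B'}$ and $S^{AB}$ via (\ref{rform}) and match them against these eigenbases. For instance one finds $\theta^{00'}\wedge\theta^{10'}=(dy_1\wedge dy_3-dy_2\wedge dy_4)+\textbf{i}(dy_1\wedge dy_4+dy_2\wedge dy_3)$ and $\theta^{00'}\wedge\theta^{11'}-\theta^{10'}\wedge\theta^{01'}=-2\textbf{i}(dy_1\wedge dy_2+dy_3\wedge dy_4)$, so $S^{0'0'}$ and $S^{0'1'}$ lie in the $+1$-eigenspace, and $S^{1'1'}=\theta^{01'}\wedge\theta^{11'}$ is handled the same way; the three are linearly independent over $\mathbb{C}$ and hence span the (complexified) self-dual space. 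The anti-self-dual triple $S^{00},S^{01},S^{11}$ is treated identically and lands in the $-1$-eigenspace, for example $S^{01}=-2\textbf{i}(dy_1\wedge dy_2-dy_3\wedge dy_4)$.

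The argument is essentially bookkeeping, so I do not expect a genuine obstacle. The only points requiring care are: (i) confirming that in the real coordinates $T$ really equals $\partial_{y_5}$ with the normalization that makes the flat Hodge star directly applicable, which uses $t=-\textbf{i}s$ and $T=\textbf{i}\partial_s=\partial_t$ from (\ref{eq:realY}); and (ii) keeping the permutation signs in $\ast$ and the interior-product signs consistent across all six cases, so that the eigenvalue assignments, and hence the self-dual versus anti-self-dual labelling of the $S$'s, come out correctly.
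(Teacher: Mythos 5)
Your proposal is correct: the paper states this proposition without any proof, and your computation is exactly the routine verification the authors evidently had in mind. Reducing $\iota_T\ast$ on horizontal $2$-forms to the four-dimensional Hodge star on $\mathrm{span}\{dy_1,\dots,dy_4\}$, identifying its $\pm1$-eigenspaces, and expanding the $S^{A'B'}$ and $S^{AB}$ via (\ref{rform}) all check out (e.g.\ $S^{0'1'}=-2\textbf{i}(dy_1\wedge dy_2+dy_3\wedge dy_4)$ and $S^{01}=-2\textbf{i}(dy_1\wedge dy_2-dy_3\wedge dy_4)$ are as you state), and the linear independence over $\mathbb{C}$ is immediate.
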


 The  curvature of the connection form $\Phi=\Phi_{AB'}\theta^{AB'}+\Phi_{T}\theta$ is
\begin{equation}\label{eq:lienar Rcurvature}
\begin{split}
F=&d\Phi+\Phi\wedge\Phi\\
=&d(\Phi_{AB'})\wedge\theta^{AB'}+d(\Phi_{T})\wedge\theta+\Phi_{T}d\theta+\Phi_{AB'}\Phi_{CD'}\theta^{AB'}\wedge\theta^{CD'}+[\Phi_{AB'},\Phi_{T}]\theta^{AB'}\wedge\theta\\
=&V_{CD'}(\Phi_{AB'})\theta^{CD'}\wedge\theta^{AB'}+T(\Phi_{AB'})\theta\wedge\theta^{AB'}+V_{AB'}(\Phi_{T})\theta^{AB'}\wedge\theta+\Phi_{T}d\theta\\
&+\Phi_{AB'}\Phi_{CD'}\theta^{AB'}\wedge\theta^{CD'}+[\Phi_{AB'},\Phi_{T}]\theta^{AB'}\wedge\theta\\
=&[V_{AB'}(\Phi_{CD'})+\Phi_{AB'}\Phi_{CD'}]\theta^{AB'}\wedge\theta^{CD'}+\Phi_{T}d\theta\\
&+(V_{AB'}(\Phi_{T})-T(\Phi_{AB'})+[\Phi_{AB'},\Phi_{T}])\theta^{AB'}\wedge\theta,
         \end{split}
  \end{equation}
by relabeling indices. Here we use the Einstein convention of summation over repeated indices.
Then we have
\begin{equation}\label{eq:lienar RcurvatureV}
F_{V}=\theta\wedge\iota_{T}F=(V_{AB'}(\Phi_{T})-T(\Phi_{AB'})+[\Phi_{AB'},\Phi_{T}])\theta^{AB'}\wedge\theta,
  \end{equation}
  and
\begin{equation}\label{eq:lienar RcurvatureH}
\begin{split}
F_{H}=&\iota_{T}(\theta\wedge F)=[V_{AB'}(\Phi_{CD'})+\Phi_{AB'}\Phi_{CD'}]\theta^{AB'}\wedge\theta^{CD'}+\Phi_{T}d\theta\\
=&(V_{00'}\Phi_{01'}-V_{01'}\Phi_{00'}+[\Phi_{00'},\Phi_{01'}])\theta^{00'}\wedge\theta^{01'}\\
&+(V_{00'}\Phi_{10'}-V_{10'}\Phi_{00'}+[\Phi_{00'},\Phi_{10'}])\theta^{00'}\wedge\theta^{10'}\\
&+(V_{00'}\Phi_{11'}-V_{11'}\Phi_{00'}+[\Phi_{00'},\Phi_{11'}]-2\Phi_{T})\theta^{00'}\wedge\theta^{11'}\\
&+(V_{01'}\Phi_{10'}-V_{10'}\Phi_{01'}+[\Phi_{01'},\Phi_{10'}]+2\Phi_{T})\theta^{01'}\wedge\theta^{10'}\\
&+(V_{01'}\Phi_{11'}-V_{11'}\Phi_{01'}+[\Phi_{01'},\Phi_{11'}])\theta^{01'}\wedge\theta^{11'}\\
&+(V_{10'}\Phi_{11'}-V_{11'}\Phi_{10'}+[\Phi_{10'},\Phi_{11'}])\theta^{10'}\wedge\theta^{11'}.
         \end{split}
  \end{equation}
So its horizontal self-dual parts is
  \begin{equation}\label{eq:lienar RcurvatureH^{+}}
\begin{split}
F_{H}^{+}
=&(V_{00'}\Phi_{10'}-V_{10'}\Phi_{00'}+[\Phi_{00'},\Phi_{10'}])S^{0'0'}
+(V_{01'}\Phi_{11'}-V_{11'}\Phi_{01'}+[\Phi_{01'},\Phi_{11'}])S^{1'1'}\\
&+\frac{1}{2}\left(V_{00'}\Phi_{11'}-V_{11'}\Phi_{00'}+[\Phi_{00'},\Phi_{11'}]
+V_{01'}\Phi_{10'}-V_{10'}\Phi_{01'}+[\Phi_{01'},\Phi_{10'}]\right)S^{0'1'}
.
         \end{split}
  \end{equation}
As a result, we get that
\begin{prop}
(1)$F_{H}^{+}=0$ is equivalent to $F(V_{0},V_{1})=0$,i.e.,
\begin{equation}\label{eq:RASD}
     \left\{\begin{array}{l}
V_{00'}(\Phi_{10'})-V_{10'}(\Phi_{00'})+[\Phi_{00'},\Phi_{10'}]=0,\\
V_{01'}(\Phi_{10'})+V_{00'}(\Phi_{11'})-V_{10'}(\Phi_{01'})-V_{11'}(\Phi_{00'})+[\Phi_{00'},\Phi_{11'}]+[\Phi_{01'},\Phi_{10'}]=0,\\
V_{01'}(\Phi_{11'})-V_{11'}(\Phi_{01'})+[\Phi_{01'},\Phi_{11'}]=0.
         \end{array}   \right.
 \end{equation}
 (2)  $F_{V}=0$ is equivalent to
$V_{AB'}(\Phi_{T})-T(\Phi_{AB'})+[\Phi_{AB'},\Phi_{T}]=0$.
\end{prop}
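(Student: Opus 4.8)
The plan is to read off both equivalences directly from the explicit curvature decomposition already recorded in (\ref{eq:lienar RcurvatureH^{+}}) and (\ref{eq:lienar RcurvatureV}). The substantive computation — expanding $F={\rm d}\Phi+\Phi\wedge\Phi$ in the frame $\{\theta^{AB'}\wedge\theta^{CD'},\theta^{AB'}\wedge\theta\}$ and projecting onto the horizontal self-dual summand — has already been carried out, so what remains is essentially linear algebra. The one structural input needed is that $\{S^{0'0'},S^{0'1'},S^{1'1'}\}$ forms a basis of $\Omega_{H}^{2+}(\mathscr H^{\mathbb{R}})$, which is the content of the preceding Proposition.

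For part (1), since $S^{0'0'},S^{0'1'},S^{1'1'}$ are linearly independent, the vanishing $F_{H}^{+}=0$ holds if and only if each of their three coefficients in (\ref{eq:lienar RcurvatureH^{+}}) vanishes, and those three scalar equations are precisely the system (\ref{eq:RASD}). To connect this with $F(V_{0},V_{1})=0$, I would identify the coefficients of $S^{0'0'}$, $S^{1'1'}$ and $S^{0'1'}$ with the curvature components $F(V_{00'},V_{10'})$, $F(V_{01'},V_{11'})$ and $\tfrac{1}{2}\left(F(V_{00'},V_{11'})+F(V_{01'},V_{10'})\right)$ respectively, via the general formula (\ref{eq:curvature}). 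This identification is where the only real bookkeeping lives: the term $\Phi_{[V_{AA'},V_{BB'}]}$ contributes $-2\Phi_{T}$ to the $\theta^{00'}\wedge\theta^{11'}$ component and $+2\Phi_{T}$ to the $\theta^{01'}\wedge\theta^{10'}$ component by (\ref{eq:YYT}), and one must verify that these two $\Phi_{T}$ contributions cancel in the combination making up the $S^{0'1'}$ coefficient. The three components so identified being zero is exactly the system (\ref{eq:Y5}), which by the argument of Section 2.2 (writing $F(V_{0}^{\zeta},V_{1}^{\zeta})$ as a quadratic in $\zeta$ and comparing coefficients of $\zeta^{2},\zeta^{1},\zeta^{0}$) is equivalent to the vanishing of $F$ along every $\alpha$-plane, i.e. $F(V_{0},V_{1})=0$.

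For part (2), the expression (\ref{eq:lienar RcurvatureV}) already displays $F_{V}$ as $\left(V_{AB'}(\Phi_{T})-T(\Phi_{AB'})+[\Phi_{AB'},\Phi_{T}]\right)\theta^{AB'}\wedge\theta$ with summation over $A,B'$. Because the four two-forms $\theta^{AB'}\wedge\theta$ are linearly independent, $F_{V}=0$ if and only if each coefficient vanishes, which is exactly the stated condition $V_{AB'}(\Phi_{T})-T(\Phi_{AB'})+[\Phi_{AB'},\Phi_{T}]=0$.

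In summary, the proposition reduces entirely to comparing coefficients of linearly independent two-forms in the already-computed curvature, together with the equivalence of (\ref{eq:Y5}) and the $\alpha$-plane condition established in Section 2.2. I anticipate no genuine obstacle; the only point demanding care is the tracking of the $\Phi_{T}$ terms produced by the nonzero brackets $[V_{00'},V_{11'}]=[V_{10'},V_{01'}]=2T$, which must cancel correctly in the off-diagonal self-dual coefficient.
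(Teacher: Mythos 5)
Your proposal is correct and follows essentially the same route as the paper, which derives the proposition directly from the displayed expansions of $F_{H}^{+}$ and $F_{V}$ and the linear independence of the bases $\{S^{0'0'},S^{0'1'},S^{1'1'}\}$ and $\{\theta^{AB'}\wedge\theta\}$, together with the $\zeta$-coefficient argument of Section 2.2. Your tracking of the $\pm2\Phi_{T}$ contributions (which cancel in the $S^{0'1'}$ coefficient, consistent with $\Phi_{[V_{00'},V_{11'}]}+\Phi_{[V_{01'},V_{10'}]}=0$) matches the paper's computation exactly.
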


When restricted to the real Heisenberg group $\mathscr H^{\mathbb{R}}$, the {\it Sub-Laplacian} becomes
$$\Delta_{b}:=(X_{1}^{2}+X_{2}^{2}+X_{3}^{2}+X_{4}^{2}),$$
where
 \begin{equation}\begin{split}
X_{1} &=\frac{1}{2}\partial_{x_{1}}+ x_{2}\partial_{s},\quad
X_{2} =\frac{1}{2}\partial_{x_{2}}- x_{1}\partial_{s},\quad
X_{3} =\frac{1}{2}\partial_{x_{3}}+x_{4}\partial_{s},\quad
X_{4} =\frac{1}{2}\partial_{x_{4}}- x_{3}\partial_{s}.
\end{split}\end{equation}
Note that $[X_{1},X_{2}]=-\partial_{s}$, $[X_{3},X_{4}]=-\partial_{s}$.
Then
\begin{equation*}
\varphi=\frac{1}{|x|^{4}+s^{2}},\qquad {\rm  where} \quad |x|=(x_{1}^{2}+x_{2}^{2}+x_{3}^{2}+x_{4}^{2})^{\frac{1}{2}},
\end{equation*}
is the solution to the Sub-Laplacian equation
\begin{equation}\label{eq:lapr}
\Delta_{b}\varphi=0.
\end{equation}

\begin{cor}
If $\varphi$ is the solution to the equation (\ref{eq:lapr}), then the connection form $\Phi$ in (\ref{AASSDD11}) restricted to the real Heisenberg group $\mathscr H^{\mathbb{R}}$ satisfies the horizontal part of the ASD contact instanton equation $F_{H}^{+}=0$.
\end{cor}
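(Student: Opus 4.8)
The plan is to deduce the corollary by assembling two results already in hand---Theorem~\ref{ASDA} and the Proposition immediately preceding this corollary---after checking that the real hypothesis (\ref{eq:lapr}) is exactly the realization over $\mathscr H^{\mathbb{R}}$ of the compatibility condition (\ref{eq:compatible-}). In outline, the connection form (\ref{AASSDD11}) solves the ASD Yang--Mills system (\ref{eq:ASD}) as soon as $\Delta_b\varphi=0$; restricting to $\mathscr H^{\mathbb{R}}$ turns (\ref{eq:ASD}) into the formally identical system (\ref{eq:RASD}); and by part~(1) of the foregoing Proposition that system is equivalent to $F_H^{+}=0$.

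First I would record that under the real embedding (\ref{eq:C-L}) the holomorphic sub-Laplacian $\Delta_b=V_{00'}V_{11'}-V_{10'}V_{01'}$ restricts, through the explicit real fields (\ref{eq:realY}), to $X_1^2+X_2^2+X_3^2+X_4^2$, so that (\ref{eq:lapr}) is precisely (\ref{eq:compatible-}) read on $\mathscr H^{\mathbb{R}}$. That $\varphi^{\mathbb{R}}=1/(|y|^{4}+s^{2})$ indeed satisfies it can be verified directly, or obtained from Theorem~\ref{ASDA}(2): the embedding gives $\|\mathbf y\|^{2}=|y|^{2}$ and $t=-\mathbf{i}s$, so $\|\mathbf y\|^{4}-t^{2}=|y|^{4}+s^{2}$ and the complex solution of (\ref{eq:compatible-}) restricts to $\varphi^{\mathbb{R}}$.

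Next I would observe that the verification in the proof of Theorem~\ref{ASDA}(1) that (\ref{AASSDD11}) satisfies (\ref{eq:ASD}) is purely formal: it uses only the structure relations (\ref{eq:YYT}), the duality $V_{AA'}(y_{BB'})=\delta_{AB}\delta_{A'B'}$, and the single scalar identity $\Delta_b\varphi=0$. All three persist under restriction to $\mathscr H^{\mathbb{R}}$, because the real fields (\ref{eq:realY}) obey the same brackets (\ref{eq:YYT}) and the same duality, and because $\varphi^{\mathbb{R}}$ is annihilated by the real $\Delta_b$ by the previous step. Hence the restricted form (\ref{AASSDD11}) satisfies (\ref{eq:RASD}), which is nothing but (\ref{eq:ASD}) written on $\mathscr H^{\mathbb{R}}$. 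Applying part~(1) of the preceding Proposition, which identifies (\ref{eq:RASD}) with $F_H^{+}=0$, then gives $F_H^{+}=0$, the horizontal part of the ASD contact instanton equation.

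The only point needing care is that the algebraic identities underlying Theorem~\ref{ASDA}(1) genuinely survive the passage to the complex-valued real fields (\ref{eq:realY}); this is bookkeeping rather than a real obstacle, since those fields were constructed precisely to reproduce (\ref{eq:YYT}) and the duality relation. Thus no new estimate or construction is required, and the corollary follows at once.
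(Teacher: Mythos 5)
Your proposal is correct and follows essentially the same route the paper intends: the corollary is stated there without a separate proof, being the immediate combination of Theorem \ref{ASDA}(1) (the Atiyah--Ward connection satisfies (\ref{eq:ASD}) whenever $\Delta_b\varphi=0$) with part (1) of the preceding Proposition identifying (\ref{eq:RASD}) with $F_H^{+}=0$. Your extra remark that the verification of (\ref{eq:ASD}) for (\ref{AASSDD11}) is formal and survives restriction to the real fields (\ref{eq:realY}) is a worthwhile precision, since a solution of the real equation (\ref{eq:lapr}) need not extend holomorphically to $\mathscr H$, so one cannot literally quote the complex theorem.
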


\begin{rem}\label{5kc} By Penrose-Ward correspondence, Wolf \cite{Wolf1} has already characterized
the solution to the horizontal part of the    ``self-dual" contact instanton equation on a contact manifold as follows.
Let $M$ be a $5$D $K$-contact manifold with Cauchy-Riemann twistor space $\pi:Z\rightarrow M$   and integrable {\it Cauchy-Riemann} structure. There is a one-to-one correspondence between \\
(i) rank-r Cauchy-Riemann vector bundles $E_{Z}\rightarrow Z$ such that the restriction $E_{Z}|_{\pi^{-1}(p)}$ is holomorphically trivial for all $p\in M$ and\\
(ii) rank-r complex vector bundles $E_{M}\rightarrow M$ equipped with a connection $\nabla$ and curvature $F=\nabla^{2}$ such that the projection on the contact distribution is $F_{H}\in \Omega^{2}_{+}(M, End E_{M})$, that is, $F_{-}=0$.
\end{rem}

\appendix

\section{Double Fibration and Twistor Transform }
Let us use the method in {\cite{Wa13}} to write down local coordinates of the double fibration
 \begin{equation}\label{Ptwistor1}        \xy 0;/r.22pc/:
 (0,20)*+{{\rm SO}(6,\mathbb{C})/R }="1";
(-20,0)*+{{\rm SO}(6,\mathbb{C})/Q}="2";
(20,0)*+{{\rm SO}(6,\mathbb{C})/P}="3";
{\ar@{->}_{\eta}"1"; "2"};
{\ar@{->}^{\tau} "1"; "3"};
\endxy
 \end{equation}
 and the mappings $\eta$ and $\tau$ in term of their local coordinates, where $P$, $Q$ and $R$ are subgroups of ${\rm SO}(6,\mathbb{C})$ in the following. Here the mapping $\eta$ and $\tau$ are given by
 $$\eta(gR)=gQ,\qquad \tau(gR)=gP.$$
${\rm SO}(6,\mathbb{C})$ is the group of all matrices preserving the matrices ${\rm I}=\left(\begin{array} {cc} 0&E_{3}\\E_{3}&0 \end{array}\right)$, where $E_{3}$ is $3\times3$ identity matrix, i.e. all $X\in {\rm GL}(6,\mathbb{C})$ such that $X^{t}IX=I$.
The Lie algebra ${\rm so}(6,\mathbb{C})$ of ${\rm SO}(6,\mathbb{C})$ consists of all $X\in {\rm gl}(6,\mathbb{C})$ such that $X^{t}I+IX=0$ (cf. \cite{FH}     \cite{H}), i.e.
 $${\rm so}(6,\mathbb{C})=\left\{\left[\begin{array} {cc} A&B\\C&-A^t \end{array}\right];B^t=-B, C^t=-C,
 A,B,C\in {\rm gl}(3,\mathbb{C})\right\}.$$

Consider the subalgebra of ${\rm so}(6, \mathbb{C})$,
\begin{equation*}
 \begin{split}
\mathfrak{p}&= \left\{\begin{bmatrix}
     *&*&*&0&*&*\\
     *&*&*&*&0&*\\
     0&0&*&*&*&0\\
     0&0&0&*&*&0\\
     0&0&0&*&*&0\\
     0&0&0&*&*&*\end{bmatrix}\in {\rm so}(6, \mathbb{C})\right\},
\mathfrak{h}= \left\{\begin{bmatrix}
     0&0&0&0&0&0\\
     0&0&0&0&0&0\\
     y_{00'}&y_{01'}&0&0&0&0\\
     0&-\frac{1}{2}t&-y_{10'}&0&0& -y_{00'}\\
     \frac{1}{2}t&0&-y_{11'}&0&0& -y_{01'} \\
     y_{10'}&y_{11'}&0&0&0&0\end{bmatrix}\in {\rm so}(6, \mathbb{C})\right\}.
       \end{split}
 \end{equation*}
 We have the decomposition  ${\rm so}(6, \mathbb{C})=\mathfrak{h}\oplus \mathfrak{p}.$
 Similarly, we have the decomposition  ${\rm so}(6, \mathbb{C})=\mathfrak{n}\oplus \mathfrak{q}$  with
 \begin{equation*}
 \begin{split}
 \mathfrak{q}&= \left\{\begin{bmatrix}
     *&*&*&0&*&*\\
     0&*&*&*&0&*\\
     0&*&*&*&*&0\\
     0&0&0&*&0&0\\
     0&0&*&*&*&*\\
     0&*&0&*&*&*\end{bmatrix}\in {\rm so}(6, \mathbb{C})\right\},\quad
     \mathfrak{n}= \left\{\begin{bmatrix}
     0&0&0&0&0&0\\
     y_{1}&0&0&0&0&0\\
     y_{2}&0&0&0&0&0\\
     0&-y_{3}&-y_{4}&0&-y_{1}& -y_{2}\\
     y_{3}&0&0&0&0&0\\
     y_{4}&0&0&0&0&0\end{bmatrix}\in {\rm so}(6, \mathbb{C})\right\},
      \end{split}
 \end{equation*}
and $\rm{so}(6, \mathbb{C})=\mathfrak{m}\oplus \mathfrak{r}, $ with
 \begin{equation*}
 \begin{split}
 \mathfrak{r}&=\mathfrak{q}\cap\mathfrak{p}=  \left\{\begin{bmatrix}
     *&*&*&0&*&*\\
     0&*&*&*&0&*\\
     0&0&*&*&*&0\\
     0&0&0&*&0&0\\
     0&0&0&*&*&0\\
     0&0&0&*&*&*\end{bmatrix}\in {\rm so}(6, \mathbb{C})\right\},\quad
     \mathfrak{m}= \left\{\begin{bmatrix}
     0&0&0&0&0&0 \\
     *&0&0&0&0&0 \\
     *&*&0&0&0&0 \\
     0&*&*&0&*&*\\
     *&0&*&0&0&*\\
     *&*&0&0&0&0\end{bmatrix}\in {\rm so}(6, \mathbb{C})\right\}.
      \end{split}
 \end{equation*}
Take $P$, $Q$ and $R$ to be Lie groups with Lie algebra to be $\mathfrak{p}$, $\mathfrak{q}$ and $\mathfrak{r}$, respectively.

To write down the double fibration concretely, let us recall the model case ${\rm SL}(2,\mathbb{C})/B_{0}$ as in \cite{Wa13} with the parabolic subgroup
   \begin{equation*}
 B_{0}=\left\{\left[\begin{array} {cc} a&b\\0&c\end{array}\right];a,b,c\in\mathbb{C}\  { \rm and }\ ac\neq0\right\}.
  \end{equation*}
The homogeneous space  $SL(2,\mathbb{C})/B_{0}$ is covered by two coordinate charts
$\iota_{1}:\mathbb{C}\longrightarrow SL(2,\mathbb{C})/B_{0}$ given by
$z\longmapsto M_{z}B_{0}$ and
$\iota_{2}:\mathbb{C}\longrightarrow SL(2,\mathbb{C})/B_{0}$  given by
$z\longmapsto W_{0}M_{\widetilde{z}}B_{0}$,
where
$
M_{z}=\left[\begin{array} {cc} 1&0\\z&1\end{array}\right],$ $
W_{0}=\left[\begin{array} {cc} 0&1\\-1&0\end{array}\right]$ is the Weyl element.
Since $W_{0}M_{\widetilde{z}}\in\left[\begin{array} {cc} 1&0\\-\frac{1}{\widetilde{z}}&1 \end{array}\right]B_{0}$,
 the transition function of these two coordinate charts is
\begin{equation*}
\begin{split}
\iota_{1}^{-1}\circ\iota_{2}:\mathbb{C}\setminus\{0\}&\longrightarrow\mathbb{C}\setminus\{0\},\quad
\widetilde{z}\longmapsto-\frac{1}{\widetilde{z}}.
\end{split}
  \end{equation*}
So we get ${\rm SL}(2,\mathbb{C})/B_{0}\cong\mathbb{C}P^{1}$.

 For an element $Y$ of $\mathfrak{h}$ given by
  $$Y=\begin{bmatrix}
     0&0&0&0&0&0\\
     0&0&0&0&0&0\\
     y_{00'}&y_{01'}&0&0&0&0\\
     0&-\frac{1}{2}t&-y_{10'}& 0&0& -y_{00'}\\
     \frac{1}{2}t&0&-y_{11'}&0&0& -y_{01'}\\
     y_{10'}&y_{11'}&0&0&0&0\end{bmatrix},$$
it's direct to see that
$$Y^{2}=\begin{bmatrix}
     0&0&0&0&0&0\\
     0&0&0&0&0&0\\
     0&0&0&0&0&0\\
     -\langle\mathbf{y}_{0'},\mathbf{y}_{0'}\rangle&-\langle\mathbf{y}_{0'},\mathbf{y}_{1'}\rangle&0&0&0&0\\
     -\langle\mathbf{y}_{0'},\mathbf{y}_{1'}\rangle&-\langle\mathbf{y}_{1'},\mathbf{y}_{1'}\rangle&0&0&0&0\\
     0&0&0&0&0&0\end{bmatrix},\qquad Y^{3}=0.$$
So we have
 \begin{equation}\label{eq:Hisen}
e^{Y}=\begin{bmatrix}
     1&0&0&0&0&0\\
     0&1&0&0&0&0\\
     y_{00'}&y_{01'}&1&0&0&0\\
     -\frac{1}{2}\langle\mathbf{y}_{0'},\mathbf{y}_{0'}\rangle&-\frac{1}{2}t-\frac{1}{2}\langle\mathbf{y}_{0'},\mathbf{y}_{1'}\rangle&-y_{10'}& 1&0& -y_{00'}\\
     \frac{1}{2}t-\frac{1}{2}\langle\mathbf{y}_{0'},\mathbf{y}_{1'}\rangle&-\frac{1}{2}\langle\mathbf{y}_{1'},\mathbf{y}_{1'}\rangle&-y_{11'}&0&1& -y_{01'}\\
     y_{10'}&y_{11'}&0&0&0&1\end{bmatrix},
\end{equation}
which we denote by $H_{(\mathbf{y},t)}.$
Then we see that $
     \mathcal H=  \{H_{(\mathbf{y},t)}|\mathbf{y}\in\mathbb{C}^{4},$ $t\in\mathbb{C} \}
 $
is the Lie group   with Lie algebra $\mathfrak{h}$.

 \begin{lem}
 $\mathcal H$ is a group, and $H:\mathscr{H}\rightarrow \mathcal H,$ given by
$
 (\mathbf{y},\mathbf{t})\mapsto H_{(\mathbf{y},\mathbf{t})}
$
 is an isomorphism.
 \end{lem}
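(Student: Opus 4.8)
The plan is to reduce everything to the single intertwining identity
\[
H_{(\mathbf{y},t)}\,H_{(\mathbf{y}',t')}=H_{(\mathbf{y},t)\circ(\mathbf{y}',t')},
\]
i.e. to show that $H$ carries the Heisenberg multiplication (\ref{eq:w-multiplication}) to matrix multiplication. Granting this, all assertions follow at once: $\mathcal H$ is closed under multiplication; the $6\times6$ identity matrix equals $H_{(0,0)}$ and so lies in $\mathcal H$; and since $(\mathbf{y},t)$ has inverse $(-\mathbf{y},-t)$ in $\mathscr H$ (note $B(\mathbf{y},\mathbf{y})=0$, whence $(\mathbf{y},t)\circ(-\mathbf{y},-t)=(0,0)$), the matrix $H_{(\mathbf{y},t)}^{-1}=H_{(-\mathbf{y},-t)}$ again lies in $\mathcal H$. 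Thus $\mathcal H$ is a group and $H$ is a homomorphism. Finally $H$ is bijective onto $\mathcal H$: surjectivity is the definition of $\mathcal H$, and injectivity follows by reading off $\mathbf{y}$ from the entries of rows $3$ and $6$ and then recovering $t$ as the difference of the $(5,1)$- and $(4,2)$-entries.

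To prove the intertwining identity I would multiply the two explicit matrices in (\ref{eq:Hisen}) entrywise. The block structure makes most entries immediate: the $(3,1),(3,2),(6,1),(6,2)$ entries are linear in $\mathbf{y}$ and simply add to $\mathbf{y}+\mathbf{y}'$, and the same is true of the entries $-y_{10'},-y_{00'},-y_{11'},-y_{01'}$ in rows $4,5$; the ones on the diagonal guarantee these pick up no quadratic corrections. The only substantive check is the $2\times2$ quadratic block formed by the $(4,1),(4,2),(5,1),(5,2)$ entries, where the cocycle $B(\mathbf{y},\mathbf{y}')$ must emerge.

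Concretely, computing for example the $(4,2)$-entry of the product yields
\[
-\tfrac12(t+t')-\tfrac12\langle\mathbf{y}_{0'},\mathbf{y}_{1'}\rangle-\tfrac12\langle\mathbf{y}'_{0'},\mathbf{y}'_{1'}\rangle-y_{10'}y'_{01'}-y_{00'}y'_{11'},
\]
which must be matched against the $(4,2)$-entry of $H_{(\mathbf{y}+\mathbf{y}',\,t+t'+B(\mathbf{y},\mathbf{y}'))}$, namely $-\tfrac12 t''-\tfrac12\langle\mathbf{y}''_{0'},\mathbf{y}''_{1'}\rangle$ with $t''=t+t'+B(\mathbf{y},\mathbf{y}')$ and $\mathbf{y}''=\mathbf{y}+\mathbf{y}'$. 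Expanding $\langle\mathbf{y}''_{0'},\mathbf{y}''_{1'}\rangle$ by bilinearity and symmetry of $\langle\cdot,\cdot\rangle$ and substituting the explicit $B$ from (\ref{eq:w-multiplication}), the $-\tfrac12 B$ contribution cancels exactly half of the mixed products and reproduces the displayed formula; the remaining three quadratic entries are checked identically. This bookkeeping of the quadratic terms — tracking the signs inside $B$ against the symmetry of $\langle\cdot,\cdot\rangle$ — is the one genuine obstacle, and it is precisely where the structure of $\mathfrak h$ (equivalently the bracket relations (\ref{eq:YYT})) enters.

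A more structural alternative avoids the $6\times6$ multiplication entirely. Since $Y^3=0$ for every $Y\in\mathfrak h$ and $\mathfrak h$ is two-step nilpotent with central brackets (lying in the $t$-direction, reflecting (\ref{eq:YYT})), the Baker--Campbell--Hausdorff series truncates to $e^{Y}e^{Y'}=e^{Y+Y'+\frac12[Y,Y']}$. One then identifies $Y_{(\mathbf{y},t)}+Y_{(\mathbf{y}',t')}+\tfrac12[Y_{(\mathbf{y},0)},Y_{(\mathbf{y}',0)}]$ with $Y_{(\mathbf{y}+\mathbf{y}',\,t+t'+B(\mathbf{y},\mathbf{y}'))}$; this repackages the same quadratic bookkeeping as a single bracket computation, but confirms the homomorphism property in a coordinate-free manner.
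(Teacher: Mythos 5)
Your proposal is correct and follows essentially the same route as the paper: both verify the homomorphism identity $H_{(\mathbf{y},t)}H_{(\mathbf{y}',t')}=H_{(\mathbf{y},t)\circ(\mathbf{y}',t')}$ by explicit multiplication of the matrices in (\ref{eq:Hisen}), with the quadratic $(4,1),(4,2),(5,1),(5,2)$ block being the only nontrivial check (your $(4,2)$-entry computation matches the paper's $\widetilde{t}_{01'}$). You spell out the group axioms and injectivity that the paper dismisses as obvious, and your Baker--Campbell--Hausdorff remark is a valid coordinate-free alternative, but the core argument is the same.
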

 \begin{proof}
 For $(\mathbf{y},t)$, $(\mathbf{\hat{y}},\hat{t})$ $\in\mathscr{H}$, it is direct to check that
  \begin{equation}
 \begin{split}&
 H_{(\mathbf{y},\mathbf{t})}\cdot H_{(\mathbf{\hat{y}},\mathbf{\hat{t}} )}=
 \begin{bmatrix}
     1&0&0&0&0&0\\
     0&1&0&0&0&0\\
     y_{00'}+\hat{y}_{00'}&y_{01'}+\hat{y}_{01'}&1&0&0&0\\
     \widetilde{t}_{00'}& \widetilde{t}_{01'}&y_{10'}+\hat{y}_{10'}& 1&0&-(y_{00'}+\hat{y}_{00'})\\
     \widetilde{t}_{10'}&\widetilde{t}_{11'}&y_{11'}+\hat{y}_{11'}& 0&1& -(y_{01'}+\hat{y}_{01'})\\
     y_{10'}+\hat{y}_{10'}&y_{11'}+\hat{y}_{11'}&0&0&0&1\end{bmatrix}
 \end{split}
 \end{equation}
with
 \begin{equation*}
 \begin{split}
 \widetilde{t}_{00'}&=-\frac{1}{2}\langle\mathbf{y}_{0'},\mathbf{y}_{0'}\rangle-y_{10'}\hat{y}_{00'}-\frac{1}{2}\langle\mathbf{\hat{y}}_{0'},\mathbf{\hat{y}}_{0'}\rangle-y_{00'}\hat{y}_{10'}
 =-\frac{1}{2}\langle\mathbf{y}_{0'}+\mathbf{\hat{y}}_{0'},\mathbf{y}_{0'}+\mathbf{\hat{y}}_{0'}\rangle,\\
 \widetilde{t}_{10'}&=\frac{1}{2}t-\frac{1}{2}\langle\mathbf{y}_{0'},\mathbf{y}_{1'}\rangle-\hat{y}_{00'}y_{11'}+\frac{1}{2}\hat{t}-\frac{1}{2}\langle\mathbf{\hat{y}}_{0'},\mathbf{\hat{y}}_{1'}\rangle-y_{01'}\hat{y}_{10'}\\&
 =\frac{1}{2}t+\frac{1}{2}\hat{t}-\frac{1}{2}\langle\mathbf{\hat{y}}_{0'},\mathbf{y}_{1'}
 \rangle+\frac{1}{2}\langle\mathbf{y}_{0'},\mathbf{\hat{y}}_{1'}\rangle-\frac{1}{2}\langle\mathbf{y}_{0'}
 +\mathbf{\hat{y}}_{0'},\mathbf{y}_{1'}+\mathbf{\hat{y}}_{1'}\rangle,\end{split} \end{equation*}
     \begin{equation*}
     \begin{split}
 \widetilde{t}_{01'}&=-\frac{1}{2}t-\frac{1}{2}\langle\mathbf{y}_{0'},\mathbf{y}_{1'}\rangle-y_{10'}\hat{y}_{01'}-\frac{1}{2}\hat{t}-\frac{1}{2}\langle\mathbf{\hat{y}}_{0'},\mathbf{\hat{y}}_{1'}\rangle-y_{00'}\hat{y}_{11'}\\&
 =-\frac{1}{2}t-\frac{1}{2}\hat{t}+\frac{1}{2}\langle\mathbf{\hat{y}}_{0'},\mathbf{y}_{1'}\rangle-\frac{1}{2}\langle\mathbf{y}_{0'},\mathbf{\hat{y}}_{1'}\rangle-\frac{1}{2}\langle\mathbf{y}_{0'}+\mathbf{\hat{y}}_{0'},\mathbf{y}_{1'}+\mathbf{\hat{y}}_{1'}\rangle,\\
 \widetilde{t}_{11'}& =-\frac{1}{2}\langle\mathbf{y}_{1'},\mathbf{y}_{1'}\rangle-y_{11'}\hat{y}_{11'}-\frac{1}{2}\langle\mathbf{\hat{y}}_{1'},\mathbf{\hat{y}}_{1'}\rangle-y_{01'}\hat{y}_{11'}
 =-\frac{1}{2}\langle\mathbf{y}_{1'}+\mathbf{\hat{y}}_{1'},\mathbf{y}_{1'}+\mathbf{\hat{y}}_{1'}\rangle.
 \end{split}
 \end{equation*}
It follows that
  $H_{(\mathbf{y},t )}\cdot H_{(\mathbf{\hat{y}},\hat{t})}
 =H_{(\mathbf{y}+\mathbf{\hat{y}},t+\hat{t}-\langle\mathbf{\hat{y}}_{0'},
 \mathbf{y}_{1'}\rangle+\langle\mathbf{y}_{0'},\mathbf{\hat{y}}_{1'}\rangle)},$
 i.e. $H$ is a group homomorphism. Obviously, it is an isomorphism.
\end{proof}

 Denote \begin{equation*}
 P_{\zeta}:=\begin{bmatrix}
     1&0&0&0&0&0\\
     \zeta&1&0&0&0&0\\
     0&0&1&0&0&0\\
     0&0&0&1&-\zeta&0\\
     0&0&0&0&1&0\\
     0&0&0&0&0&1\end{bmatrix}\in {\rm SO}(6,\mathbb{C}).\\
 \end{equation*}
 In fact, it's direct to check that
 $P_{\zeta}^{t}IP_{\zeta}=I.$
 We consider the two coordinate charts of $G/R$ as follows.
 $
 \varphi_{1}: \mathscr H\times\mathbb{C} \longrightarrow G/R$, $
 (\mathbf{y},t,\zeta) \longmapsto H_{(\mathbf{y},t)}\cdot P_{\zeta}\cdot R,$ and $
 \varphi_{2}: \mathscr H\times\mathbb{C} \longrightarrow G/R$, $
 (\mathbf{y},t,\zeta) \longmapsto  H_{(\mathbf{y},t)}\cdot W\cdot P_{\zeta}\cdot  R,
 $
where $W$ is the   element \begin{equation*}
 W:=\begin{bmatrix}
     0&1&0&0&0&0\\
     1&0&0&0&0&0\\
     0&0&1&0&0&0\\
     0&0&0&0&1&0\\
     0&0&0&1&0&0\\
     0&0&0&0&0&1\end{bmatrix}.
 \end{equation*}
 \begin{prop}
 The transition function of these two coordinates  charts is
 $
  \varphi_{1}^{-1}\circ\varphi_{2} : \mathscr H\times\mathbb{C^{*}}$ $ \longrightarrow \mathscr H\times\mathbb{C^{*}}$, $
 (\mathbf{y},t,\zeta) \mapsto(\mathbf{y},t,\zeta^{-1}).
 $
And $\mathscr H \times \mathbb{C}P^1$ is biholomorphically embeddded in ${\rm SO}(6,\mathbb{C})/R$.
 \end{prop}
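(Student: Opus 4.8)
The plan is to reduce $\varphi_{1}^{-1}\circ\varphi_{2}$ to a single matrix identity. Since both charts carry the same left factor $H_{(\mathbf{y},t)}$, it suffices to prove that $WP_{\zeta}=P_{\zeta^{-1}}\,r$ for some $r\in R$; then $\varphi_{2}(\mathbf{y},t,\zeta)=H_{(\mathbf{y},t)}WP_{\zeta}R=H_{(\mathbf{y},t)}P_{\zeta^{-1}}rR=H_{(\mathbf{y},t)}P_{\zeta^{-1}}R=\varphi_{1}(\mathbf{y},t,\zeta^{-1})$, which is exactly the asserted transition $(\mathbf{y},t,\zeta)\mapsto(\mathbf{y},t,\zeta^{-1})$ on $\mathscr H\times\mathbb{C}^{*}$. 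To produce $r$ I set $r:=P_{\zeta^{-1}}^{-1}WP_{\zeta}$ and evaluate it directly: left multiplication by $W$ merely interchanges rows $1,2$ and rows $4,5$ of $P_{\zeta}$, while $P_{\zeta}$ is unipotent with inverse $P_{-\zeta}$, so left multiplication by $P_{\zeta^{-1}}^{-1}=P_{-\zeta^{-1}}$ carries out two elementary row operations. The resulting matrix has its nonzero entries only in the positions allowed by $\mathfrak{r}$, and being a product of elements of ${\rm SO}(6,\mathbb{C})$ it automatically satisfies $r^{t}Ir=I$; hence $r\in R$. The presence of $\zeta^{-1}$ forces the overlap to be $\mathscr H\times\mathbb{C}^{*}$, as stated.

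For the embedding I would first patch the two charts. The transition just computed coincides with the gluing $\kappa$ of $\mathbb{C}^{5}\times\mathbb{C}P^{1}$ in (\ref{trantran1}), so $\varphi_{1}$ and $\varphi_{2}$ agree on the overlap and combine into one holomorphic map $\Phi:\mathscr H\times\mathbb{C}P^{1}\to {\rm SO}(6,\mathbb{C})/R$. The structural point is that $\mathfrak{m}=\mathfrak{h}\oplus\mathbb{C}E$, where $E=\frac{d}{d\zeta}P_{\zeta}|_{\zeta=0}$ has nonzero entries only at $(2,1)$ and $(4,5)$, is precisely the complement of $\mathfrak{r}$ in ${\rm so}(6,\mathbb{C})=\mathfrak{m}\oplus\mathfrak{r}$; a short bracket computation gives $[E,\mathfrak{h}]\subseteq\mathfrak{h}$, so $\mathfrak{m}$ is a nilpotent subalgebra and $\mathcal{M}:=\exp\mathfrak{m}$ is a unipotent subgroup. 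Since $E^{2}=0$ gives $P_{\zeta}=\exp(\zeta E)$ and $H_{(\mathbf{y},t)}=\exp(Y)$ with $Y\in\mathfrak{h}$, the assignment $(\mathbf{y},t,\zeta)\mapsto H_{(\mathbf{y},t)}P_{\zeta}$ is the second-kind exponential parametrization of $\mathcal{M}$, a biholomorphism $\mathbb{C}^{6}\xrightarrow{\sim}\mathcal{M}$. It then remains to observe that $m\mapsto mR$ embeds $\mathcal{M}$ as the open big cell: $\mathfrak{m}\cap\mathfrak{r}=0$ yields $\mathcal{M}\cap R=\{e\}$ (injectivity) and $\mathfrak{m}\cong {\rm so}(6,\mathbb{C})/\mathfrak{r}=T_{eR}({\rm SO}(6,\mathbb{C})/R)$ (immersion). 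Thus each $\varphi_{i}$ is a biholomorphism onto an open subset of the quotient, and by the standard $\mathbb{C}P^{1}$ gluing argument $\Phi$ is a biholomorphism of $\mathscr H\times\mathbb{C}P^{1}$ onto an embedded complex submanifold of ${\rm SO}(6,\mathbb{C})/R$.

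The matrix identity behind the transition function is routine. I expect the genuine obstacle to lie in the embedding half, namely in confirming that $\mathfrak{m}=\mathfrak{h}\oplus\mathbb{C}E$ is simultaneously a subalgebra and a vector-space complement of $\mathfrak{r}$, so that $\mathcal{M}$ meets each $R$-coset of the big cell exactly once and transversally, i.e. $\mathcal{M}\cap R=\{e\}$ and not merely $\mathfrak{m}\cap\mathfrak{r}=0$. This is the step that uses the precise parabolic $R=Q\cap P$ and its chosen complement; once it is in place, injectivity, the immersion property, and the final gluing are all formal consequences of the big-cell decomposition.
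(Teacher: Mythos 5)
Your computation of the transition function is correct and is exactly the paper's argument: the matrix $r=P_{\zeta^{-1}}^{-1}WP_{\zeta}$ you construct is precisely the matrix $A_{\zeta}$ in the paper's factorization $W\cdot P_{\zeta}=P_{\zeta^{-1}}\cdot A_{\zeta}$, and the paper disposes of it in the same way, by observing that its nonzero entries all sit in positions allowed by $\mathfrak{r}$ so that $A_{\zeta}\in R$. Where you genuinely differ is on the second assertion: the paper's proof stops at the transition function and offers no argument at all for the biholomorphic embedding, so your big-cell discussion ($\mathfrak{m}=\mathfrak{h}\oplus\mathbb{C}E$ is a nilpotent vector-space complement of $\mathfrak{r}$, the second-kind exponential $(\mathbf{y},t,\zeta)\mapsto H_{(\mathbf{y},t)}P_{\zeta}$ parametrizes $\mathcal{M}=\exp\mathfrak{m}$, and $\mathcal{M}R/R$ is an open cell) is additional content rather than a variant of the paper's route; its Lie-algebra ingredients do check out ($\dim\mathfrak{m}=6$, $\mathfrak{h}\subset\mathfrak{m}$, $E\notin\mathfrak{r}$, and $[E,\mathfrak{h}]\subseteq\mathfrak{h}$). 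The one loose end is the one you flag yourself: $\mathfrak{m}\cap\mathfrak{r}=0$ only makes $\mathcal{M}\cap R$ discrete, not trivial. It can be closed abstractly ($\mathcal{M}\cap R$ is a zero-dimensional algebraic subgroup of a unipotent group over $\mathbb{C}$, hence trivial) or, more in the spirit of this appendix, by the same kind of entrywise computation the paper performs for $P_{\zeta}^{-1}H_{(\mathbf{y},t)}P_{\zeta}$: requiring $P_{\zeta'}^{-1}H_{(\mathbf{y},t)}P_{\zeta}$ to have all entries below the $\mathfrak{r}$-pattern vanish forces $\mathbf{y}=0$, $t=0$ and $\zeta=\zeta'$. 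With that supplied, your argument proves strictly more than the paper's written proof does.
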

 \begin{proof}
 Note that
 \begin{equation}\label{eq:decom}
  \begin{split}
  W\cdot P_{\zeta}
  &=\begin{bmatrix}
     \zeta&1&0&0&0&0\\
     1&0&0&0&0&0\\
     0&0&1&0&0&0\\
     0&0&0&0&1&0\\
     0&0&0&1&-\zeta&0\\
     0&0&0&0&0&1\end{bmatrix}=\begin{bmatrix}
     1&0&0&0&0&0& \\
     \zeta^{-1}&1&0&0&0&0\\
     0&0&1&0&0&0\\
     0&0&0&1&-\zeta^{-1}&0\\
     0&0&0&0&1&0\\
     0&0&0&0&0&1\end{bmatrix}\cdot
  \begin{bmatrix}
     \zeta&1&0&0&0&0\\
     0&-\zeta^{-1}&0&0&0&0\\
     0&0&1&0&0&0\\
     0&0&0&\zeta^{-1}&0&0\\
     0&0&0&1&-\zeta&0\\
     0&0&0&0&0&1\end{bmatrix}\\
  &
  :=P_{\zeta^{-1}}\cdot A_{\zeta}.
      \end{split}
  \end{equation}
  Then
  \begin{equation*}\begin{split}
 \varphi_{2}(\mathbf{y},t,\zeta)&= H_{(\mathbf{y},t)}\cdot W\cdot P_{\zeta}\cdot R
= H_{(\mathbf{y},t)}\cdot P_{\zeta^{-1}}\cdot A_{\zeta}\cdot R= H_{(\mathbf{y},t)}\cdot P_{\zeta^{-1}}\cdot R,
  \end{split}\end{equation*}
since $A_{\zeta}\in R$.
So  we get the transition function
$\varphi_{1}^{-1}\circ\varphi_{2}(\mathbf{y},t,\zeta)
  =\varphi_{1}^{-1}(H_{(\mathbf{y},t)}\cdot P_{\zeta^{-1}}\cdot R)
  =(\mathbf{y},t,\zeta^{-1}).$
  The proposition is proved.
 \end{proof}

 Similarly, we consider two coordinate charts of $G/Q$. One piece is
 $
 \psi_{1}: \mathbb{C}^{4} \longrightarrow G/Q $ given by $
 (\mathbf{x},t,\zeta)\longmapsto P_{\zeta}\cdot N_{(\mathbf{x},t)}\cdot Q,
 $
 where $\mathbf{x}=(x_{1},x_{2})\in\mathbb{C}^{2}$ and $N_{(\mathbf{x},t)}=H_{(\mathbf{y},t)}$ with $\mathbf{y}=\left[\begin{array} {cc} x_{1}&0\\x_{2}&0\end{array}\right]$.
The other is
 $
 \psi_{2}: \mathbb{C}^{4} \longrightarrow G/Q$ given by $
 (\mathbf{x},t,\zeta)\longmapsto W\cdot P_{\zeta}\cdot N_{(\mathbf{x},t)}\cdot Q.
$
\begin{prop}
 The transition function $\psi_{1}^{-1}\circ\psi_{2} : \mathbb{C}^{4}\setminus(\mathbb{C}^{3}\times\{0\})\longrightarrow \mathbb{C}^{4}\setminus(\mathbb{C}^{3}\times\{0\})$ of these two coordinates charts is given by
 $
 (\mathbf{x},t,\zeta)\mapsto(\zeta^{-1}\mathbf{x},-t-\zeta^{-1}x_{1}x_{2},\zeta^{-1}),
$
which coincides with (\ref{transi1}) up to a sign.
 \end{prop}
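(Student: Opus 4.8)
The plan is to identify the parabolic $Q$ geometrically and thereby convert the coset computation into a proportionality of single vectors. The crucial observation is that the first column of every $X\in\mathfrak q$ is supported only in its $(1,1)$ entry, so $Xe_1\in\mathbb{C}e_1$; hence $\exp\mathfrak q$, and therefore $Q$, stabilizes the line $\mathbb{C}e_1$, which is isotropic because $e_1^{t}Ie_1=0$. Since $\dim G/Q=4$ coincides with the dimension of the smooth quadric of isotropic lines in $\mathbb{C}^6$, we get $Q=\mathrm{Stab}_G(\mathbb{C}e_1)$ exactly, realizing the twistor space as an open set of this quadric $4$-fold. Consequently the coset $gQ$ is completely encoded by the projective point $[ge_1]\in\mathbb{C}P^5$: for $q\in Q$ one has $(gq)e_1=g(qe_1)\in\mathbb{C}(ge_1)$, so right multiplication by $Q$ only rescales the first column. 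Thus $\psi_2(\mathbf{x},t,\zeta)$ and $\psi_1(\mathbf{x}',t',\zeta')$ define the same point of $G/Q$ if and only if their first columns are proportional.

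With this reduction in hand, I would compute the two first columns from the explicit matrices $W$, $P_\zeta$ and $N_{(\mathbf{x},t)}=H_{(\mathbf{y},t)}$ (with $\mathbf{y}_{1'}=0$). Applying $N_{(\mathbf{x},t)}$, then $P_\zeta$, then $W$ to $e_1$ gives
\[
\psi_2(\mathbf{x},t,\zeta)\,e_1=\bigl(\zeta,\ 1,\ x_1,\ \tfrac{1}{2} t,\ -x_1x_2-\tfrac{1}{2}\zeta t,\ x_2\bigr)^{t},
\]
while the same recipe applied to $\psi_1$ yields
\[
\psi_1(\mathbf{x}',t',\zeta')\,e_1=\bigl(1,\ \zeta',\ x_1',\ -x_1'x_2'-\tfrac{1}{2}\zeta' t',\ \tfrac{1}{2} t',\ x_2'\bigr)^{t}.
\]
The interchange of rows $1,2$ and $4,5$ produced by $W$ is the source of the inversion $\zeta\mapsto\zeta^{-1}$, consistent with $W\cdot P_\zeta=P_{\zeta^{-1}}\cdot A_\zeta$ in (\ref{eq:decom}) read off at the level of cosets.

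Finally I would solve the proportionality $\psi_2(\mathbf{x},t,\zeta)\,e_1=\lambda\,\psi_1(\mathbf{x}',t',\zeta')\,e_1$. The first two entries force $\lambda=\zeta$ and $\zeta'=\zeta^{-1}$; entries three and six give $\mathbf{x}'=\zeta^{-1}\mathbf{x}$; and entries four and five are mutually consistent and determine $t'=-t-2\zeta^{-1}x_1x_2$. Under the correspondence $x_1\leftrightarrow\omega_0$, $x_2\leftrightarrow\omega_1$, $t\leftrightarrow\omega_2$ this is exactly the transition map $\Phi$ of (\ref{transi1}) up to the sign in the third coordinate, and the residual normalization of the quadratic term is precisely what the phrase \emph{up to a sign} is meant to absorb. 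I expect the only real difficulty to be the step in the first paragraph: because a parabolic subgroup does not inherit the sparsity pattern of its Lie algebra, a direct entrywise factorization $A_\zeta N_{(\mathbf{x},t)}=N_{(\mathbf{x}',t')}B$ with $B$ \emph{required} to have the zero pattern of $\mathfrak q$ is overdetermined and inconsistent, so the identification $Q=\mathrm{Stab}_G(\mathbb{C}e_1)$ is what makes the coset comparison both correct and tractable. Once that is in place the remaining matrix bookkeeping is routine.
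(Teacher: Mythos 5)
Your proof is correct, and it takes a genuinely different route from the paper's. The paper works entirely inside the group: it uses the factorization $W\cdot P_{\zeta}=P_{\zeta^{-1}}\cdot A_{\zeta}$ with $A_{\zeta}\in Q$, conjugates $N_{(\mathbf{x},t)}$ by $A_{\zeta}$ by brute-force matrix multiplication, and then splits the result as $H_{(\zeta^{-1}x_{1},\zeta^{-1}x_{2},0,0,-t-2\zeta^{-1}x_{1}x_{2})}\cdot H_{(0,0,x_{1},x_{2},0)}$ with the second factor absorbed into $Q$. You instead identify $G/Q$ geometrically as (an open subset of) the quadric of isotropic lines in $\mathbb{C}P^{5}$, using the observation that $\mathfrak q\, e_{1}\subseteq\mathbb{C}e_{1}$ together with the dimension count $\dim\mathfrak n=4=\dim Q^{4}$, and then reduce the coset comparison to proportionality of first columns; this replaces the $6\times 6$ conjugation computation with two matrix--vector products and makes the appearance of $\zeta\mapsto\zeta^{-1}$ transparent (it is just the row swap effected by $W$). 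The one point you pass over lightly is that equality of Lie algebras only gives $Q=\mathrm{Stab}_{G}(\mathbb{C}e_{1})^{\circ}$; you need the stabilizer of an isotropic line to be connected (standard for parabolic subgroups of a connected group acting on a simply connected quadric) to get both implications of the coset criterion, though even the inclusion $Q\subseteq\mathrm{Stab}_{G}(\mathbb{C}e_{1})$ suffices to pin down the transition map once one knows it exists. Your answer $t'=-t-2\zeta^{-1}x_{1}x_{2}$ agrees with the paper's own derivation (the coefficient $1$ in the proposition's displayed formula is a typo in the statement, as is the quadratic term in (\ref{transi1})), so the discrepancy you flag is in the source, not in your computation.
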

\begin{proof}
Note that
\begin{equation}\label{eq:you}
\begin{split}
\psi_{2}(\mathbf{x},t,\zeta)
&=W\cdot P_{\zeta}\cdot N_{(\mathbf{x},t)}\cdot Q
=P_{\zeta^{-1}}\cdot A_{\zeta}\cdot H_{(x_{1},x_{2},0,0,t)}\cdot A_{\zeta}^{-1}\cdot  Q,
\end{split}
\end{equation}
by (\ref{eq:decom}) and $A_{\zeta}\in Q.$ Note that
\begin{equation*}\label{eq:bo}
\begin{split}
&A_{\zeta}\cdot H_{(x_{1},x_{2},0,0,t)}\cdot A_{\zeta}^{-1}\\
=&\begin{bmatrix}
     \zeta&1&0&0&0&0& \\
     0&-\zeta^{-1}&0&0&0&0& \\
     0&0&1&0&0&0& \\
     0&0&0&\zeta^{-1}&0&0&\\
     0&0&0&1&-\zeta& 0 &\\
     0&0&0&0&0&1\end{bmatrix}\cdot
     \begin{bmatrix}
     1&0&0&0&0&0& \\
     0&1&0&0&0&0& \\
     x_{1}&0&1&0&0&0& \\
     -x_{1}x_{2}&-\frac{1}{2}t&-x_{2}&1&0&-x_{1}&\\
     \frac{1}{2}t&0&0&0&1& 0 &\\
     x_{2}&0&0&0&0&1\end{bmatrix}\cdot
      A_{\zeta}^{-1}\\
=&\begin{bmatrix}
     \zeta&1&0&0&0&0& \\
     0&-\zeta^{-1}&0&0&0&0& \\
     x_{1}&0&1&0&0&0& \\
     -\zeta^{-1}x_{1}x_{2}&-\frac{1}{2}\zeta^{-1}t&-\zeta^{-1}x_{2}&\zeta^{-1}&0&-\zeta^{-1}x_{1}&\\
     -x_{1}x_{2}-\frac{1}{2}t\zeta&-\frac{1}{2}t&-x_{2}&1&-\zeta&-x_{1}&\\
     x_{2}&0&0&0&0&1\end{bmatrix}\cdot
     \begin{bmatrix}\begin{smallmatrix}
     \zeta^{-1}&1&0&0&0&0& \\
     0&-\zeta&0&0&0&0& \\
     0&0&1&0&0&0& \\
     0&0&0&\zeta&0&0&\\
     0&0&0&1&-\zeta^{-1}&0&\\
     0&0&0&0&0&1\end{smallmatrix}\end{bmatrix}\end{split}
\end{equation*}
\begin{equation*}
\begin{split}=&\begin{bmatrix}
     1&0&0&0&0&0& \\
     0&1&0&0&0&0& \\
     \zeta^{-1}x_{1}&x_{1}&1&0&0&0& \\
     -\zeta^{-2}x_{1}x_{2}&\frac{1}{2}t-\zeta^{-1}x_{1}x_{2}&-\zeta^{-1}x_{2}&1&0&-\zeta^{-1}x_{1}&\\
     -\frac{1}{2}t-\zeta^{-1}x_{1}x_{2}&-x_{1}x_{2}&-x_{2}&0&1&-x_{1}&\\
    \zeta^{-1}x_{2}&x_{2}&0&0&0&1\end{bmatrix}\\
=&H_{(\zeta^{-1}x_{1},\zeta^{-1}x_{2},x_{1},x_{2},-t)}\\
=&H_{(\zeta^{-1}x_{1},\zeta^{-1}x_{2},0,0,-t-2\zeta^{-1}x_{1}x_{2})}\cdot H_{(0,0,x_{1},x_{2},0)}.
\end{split}
\end{equation*}

Substituting the above identity to (\ref{eq:you}) to get
\begin{equation*}
\psi_{2}(x_{1},x_{2},t,\zeta)
=P_{\zeta^{-1}}\cdot H_{(\zeta^{-1}x_{1},\zeta^{-1}x_{2},0,0,-t-2\zeta^{-1}x_{1}x_{2})}\cdot Q
=P_{\zeta^{-1}}\cdot N_{(\zeta^{-1}x_{1},\zeta^{-1}x_{2},-t-2\zeta^{-1}x_{1}x_{2})}Q,
\end{equation*}
by $H_{(0,0,x_{1},x_{2},0)}$$ \in Q$. So we have
\begin{equation*}
  \psi_{1}^{-1}\circ\psi_{2}(x_{1},x_{2},t,\zeta)
  =\psi_{1}^{-1}(P_{\zeta^{-1}}\cdot N_{(\zeta^{-1}x_{1},\zeta^{-1}x_{2},-t-2\zeta^{-1}x_{1}x_{2})}Q)
  =(\zeta^{-1}x_{1},\zeta^{-1}x_{2},-t-2\zeta^{-1}x_{1}x_{2},\zeta^{-1})
\end{equation*}
by  definition. The proposition is proved.
\end{proof}

\begin{lem}\label{eq:P1}
$P_{\zeta}^{-1}H_{(\mathbf{y}_{0'},\mathbf{y}_{1'},t)}P_{\zeta}=H_{(\mathbf{y}_{0'}+\zeta\mathbf{y}_{1'},\mathbf{y}_{1'},t)}.$
\end{lem}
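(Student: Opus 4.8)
The plan is to verify this conjugation identity at the Lie-algebra level rather than multiplying out the three $6\times 6$ matrices directly. Recall from \eqref{eq:Hisen} that $H_{(\mathbf{y},t)}=e^{Y}$, where $Y\in\mathfrak h$ is the Heisenberg element attached to $(\mathbf{y},t)$, and observe that $P_\zeta=e^{\zeta N}$ with
\[
N=\begin{bmatrix} 0&0&0&0&0&0\\ 1&0&0&0&0&0\\ 0&0&0&0&0&0\\ 0&0&0&0&-1&0\\ 0&0&0&0&0&0\\ 0&0&0&0&0&0\end{bmatrix},
\]
since $N^{2}=0$. Because $g^{-1}e^{Y}g=e^{g^{-1}Yg}$ for any invertible $g$, the identity will follow once I establish the purely linear statement $P_\zeta^{-1}YP_\zeta=Y'$, where $Y'\in\mathfrak h$ is the element associated to $(\mathbf{y}_{0'}+\zeta\mathbf{y}_{1'},\mathbf{y}_{1'},t)$; exponentiating $Y'$ then reproduces $H_{(\mathbf{y}_{0'}+\zeta\mathbf{y}_{1'},\mathbf{y}_{1'},t)}$ by the very formula \eqref{eq:Hisen}.

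First I would write $P_\zeta^{-1}YP_\zeta=\mathrm{Ad}(e^{-\zeta N})Y=e^{-\zeta\,\mathrm{ad}_N}Y=Y-\zeta[N,Y]+\tfrac{\zeta^{2}}{2}[N,[N,Y]]-\cdots$ and check that the series truncates after the linear term. A short bracket computation shows that $[N,Y]$ is supported in the entries $(3,1),(4,3),(4,6),(6,1)$, with values $-y_{01'},\,y_{11'},\,y_{01'},\,-y_{11'}$ respectively, and that applying $\mathrm{ad}_N$ once more annihilates it, so $\mathrm{ad}_N^{2}Y=0$ and hence
\[
P_\zeta^{-1}YP_\zeta=Y-\zeta[N,Y].
\]
Adding $-\zeta[N,Y]$ to $Y$ exactly replaces $y_{00'}$ by $y_{00'}+\zeta y_{01'}$ in the $(3,1)$ slot (and correspondingly in $(4,6)$) and $y_{10'}$ by $y_{10'}+\zeta y_{11'}$ in the $(6,1)$ slot (and in $(4,3)$), while leaving $y_{01'},\,y_{11'}$ and $t$ untouched. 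This is precisely the $\mathfrak h$-element $Y'$ of $(\mathbf{y}_{0'}+\zeta\mathbf{y}_{1'},\mathbf{y}_{1'},t)$, which completes the argument.

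The only genuine work is the single bracket $[N,Y]$ together with the verification that $\mathrm{ad}_N^{2}Y=0$, both of which are immediate since $N$ has just two nonzero off-diagonal entries. I regard the decisive advantage of this route as the fact that the quadratic ``$\langle\cdot,\cdot\rangle$'' entries of $H$ never enter the computation: they are produced automatically, and with the correctly shifted arguments $\mathbf{y}_{0'}+\zeta\mathbf{y}_{1'}$, when $Y'$ is exponentiated via $e^{Y'}=I+Y'+\tfrac12(Y')^{2}$. A direct triple matrix product would also succeed, but it would force one to confirm by hand that, for instance, $-\tfrac12\langle\mathbf{y}_{0'},\mathbf{y}_{0'}\rangle$ transforms into $-\tfrac12\langle\mathbf{y}_{0'}+\zeta\mathbf{y}_{1'},\mathbf{y}_{0'}+\zeta\mathbf{y}_{1'}\rangle$; this tedious bookkeeping of the quadratic entries is exactly what the Lie-algebra reduction bypasses, so it is the step I would most want to avoid doing explicitly.
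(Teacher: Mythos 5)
Your proof is correct, but it takes a genuinely different route from the paper. The paper proves the lemma by brute force: it multiplies out $P_{\zeta}^{-1}H_{(\mathbf{y}_{0'},\mathbf{y}_{1'},t)}P_{\zeta}$ as three explicit $6\times 6$ matrices and checks entry by entry that the quadratic slots reorganize into $-\tfrac12\langle\mathbf{y}_{0'}+\zeta\mathbf{y}_{1'},\mathbf{y}_{0'}+\zeta\mathbf{y}_{1'}\rangle$, $-\tfrac12 t-\tfrac12\langle\mathbf{y}_{0'}+\zeta\mathbf{y}_{1'},\mathbf{y}_{1'}\rangle$, etc. You instead observe that $P_\zeta=e^{\zeta N}$ with $N$ nilpotent of square zero, reduce the conjugation to $\mathrm{Ad}(e^{-\zeta N})Y=e^{-\zeta\,\mathrm{ad}_N}Y$, and verify the single bracket $[N,Y]$ (your entries $(3,1)=-y_{01'}$, $(4,3)=y_{11'}$, $(4,6)=y_{01'}$, $(6,1)=-y_{11'}$ and the vanishing of $\mathrm{ad}_N^2Y$ all check out), so that $P_\zeta^{-1}YP_\zeta=Y-\zeta[N,Y]$ is exactly the $\mathfrak h$-element of $(\mathbf{y}_{0'}+\zeta\mathbf{y}_{1'},\mathbf{y}_{1'},t)$ and exponentiating via (\ref{eq:Hisen}) finishes the argument. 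What your approach buys is precisely what you identify: the quadratic $\langle\cdot,\cdot\rangle$ entries of $H$ never need to be tracked, since they are regenerated automatically from $e^{Y'}=I+Y'+\tfrac12(Y')^2$; the cost is the (mild) extra input that conjugation commutes with the exponential and that $\mathrm{Ad}\circ\exp=\exp\circ\,\mathrm{ad}$. The paper's computation is more elementary but requires the tedious bookkeeping you avoid; either is a complete proof.
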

\begin{proof} We have
\begin{equation*}
\begin{split}&
P_{\zeta}^{-1}H_{(\mathbf{y}_{0'},\mathbf{y}_{1'},t)}P_{\zeta} \\
=&\begin{bmatrix}\begin{smallmatrix}
     1&0&0&0&0&0& \\
     -\zeta&1&0&0&0&0& \\
     0&0&1&0&0&0& \\
     0&0&0&1&\zeta&0&\\
     0&0&0&0&1& 0 &\\
     0&0&0&0&0&1
     \end{smallmatrix}\end{bmatrix}\cdot
     \begin{bmatrix}\begin{smallmatrix}
     1&0&0&0&0&0& \\
     0&1&0&0&0&0& \\
     y_{00'}&y_{01'}&1&0&0&0&\\
     -\frac{1}{2}\langle\mathbf{y}_{0'},\mathbf{y}_{0'}\rangle&-\frac{1}{2}t-\frac{1}{2}\langle\mathbf{y}_{0'},\mathbf{y}_{1'}
     \rangle&-y_{10'}& 1&0& -y_{00'}&\\
     \frac{1}{2}t-\frac{1}{2}\langle\mathbf{y}_{0'},\mathbf{y}_{1'}\rangle&-\frac{1}{2}\langle\mathbf{y}_{1'},\mathbf{y}_{1'}
     \rangle&-y_{1'1'}&0&1& -y_{01'} &\\
     y_{10'}&y_{11'}&0&0&0&1\end{smallmatrix}\end{bmatrix}\cdot P_{\zeta}\\
=&\begin{bmatrix}\begin{smallmatrix}
     1&0&0&0&0&0& \\
     -\zeta&1&0&0&0&0& \\
     y_{00'}&y_{01'}&1&0&0&0&\\
     -\frac{1}{2}\langle\mathbf{y}_{0'},\mathbf{y}_{0'}\rangle+\frac{1}{2}t\zeta-\frac{1}{2}\zeta\langle\mathbf{y}_{0'},\mathbf{y}_{1'}\rangle&-\frac{1}{2}t-\frac{1}{2}\langle\mathbf{y}_{0'},\mathbf{y}_{1'}\rangle
     -\zeta\langle\mathbf{y}_{1'},\mathbf{y}_{1'}\rangle&-y_{10'}-\zeta y_{11'}& 1&\zeta& -y_{00'}-\zeta y_{01'}&\\
     \frac{1}{2}t-\frac{1}{2}\langle\mathbf{y}_{0'},\mathbf{y}_{1'}\rangle&-\frac{1}{2}\langle\mathbf{y}_{1'},\mathbf{y}_{1'}\rangle&-y_{1'1'}&0&1& -y_{01'} &\\
     y_{10'}&y_{11'}&0&0&0&1\end{smallmatrix}\end{bmatrix}\cdot P_{\zeta}\\
=&\begin{bmatrix}\begin{smallmatrix}
     1&0&0&0&0&0& \\
     0&1&0&0&0&0& \\
     y_{00'}+\zeta y_{01'}&y_{01'}&1&0&0&0&\\
     -\frac{1}{2}\langle\mathbf{y}_{0'}+\zeta \mathbf{y}_{1'},\mathbf{y}_{0'}+\zeta \mathbf{y}_{1'}\rangle&-\frac{1}{2}t-\frac{1}{2}\langle\mathbf{y}_{0'}+\zeta \mathbf{y}_{1'},\mathbf{y}_{1'}\rangle&-y_{10'}-\zeta y_{11'}& 1&0& -y_{00'}-\zeta y_{01'}&\\
     \frac{1}{2}t-\frac{1}{2}\langle\mathbf{y}_{0'}+\zeta \mathbf{y}_{1'},\mathbf{y}_{1'}\rangle&-\frac{1}{2}\langle\mathbf{y}_{1'},\mathbf{y}_{1'}\rangle&-y_{1'1'}&0&1& -y_{01'} &\\
     y_{10'}+\zeta y_{11'}&y_{11'}&0&0&0&1\end{smallmatrix}\end{bmatrix}\\&
      =H_{(\mathbf{y}_{0'}+\zeta \mathbf{y}_{1'},\mathbf{y}_{1'},t)}
\end{split}
\end{equation*}
by the definition (\ref{eq:Hisen}) of $H_{(\mathbf{y}_{0'},\mathbf{y}_{1'},t)}$.
\end{proof}
 \begin{prop}\label{ma-pen} The mapping $\eta:{\rm SO}(6,\mathbb{C})/R\longrightarrow {\rm SO}(6,\mathbb{C})/Q$ in (\ref{Ptwistor1}) is locally given by
 \begin{equation*}
 \eta\left(H_{(\mathbf{y},t)} \cdot P_{\zeta}R\right)=P_{\zeta}\cdot N_{\left(y_{00'}+\zeta y_{01'},y_{10'}+\zeta y_{11'},t-\langle\mathbf{y}_{0'}+\zeta \mathbf{y}_{1'},\mathbf{y}_{1'}\rangle\right)}Q,
 \end{equation*}
which coincides with (\ref{eq:psi}).
 \end{prop}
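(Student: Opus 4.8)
The plan is to compute $\eta\left(H_{(\mathbf{y},t)}\cdot P_{\zeta}R\right)$ straight from the definition $\eta(gR)=gQ$ and then rewrite the resulting coset representative in the normal form $P_{\zeta}\cdot N_{(\cdots)}\cdot Q$ that serves as the local coordinate on $G/Q$ via $\psi_{1}$. First I would observe that $\mathfrak{r}=\mathfrak{q}\cap\mathfrak{p}\subseteq\mathfrak{q}$, so $R\subseteq Q$ and $\eta$ is well defined with
\[
\eta\left(H_{(\mathbf{y},t)}\cdot P_{\zeta}R\right)=H_{(\mathbf{y},t)}\cdot P_{\zeta}\cdot Q.
\]
Thus everything reduces to expressing $H_{(\mathbf{y},t)}P_{\zeta}$ modulo $Q$ in the required form, and reading off the three arguments of $N$ together with $\zeta$.

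The key algebraic move is to push $P_{\zeta}$ to the left by Lemma \ref{eq:P1}, which gives $H_{(\mathbf{y}_{0'},\mathbf{y}_{1'},t)}P_{\zeta}=P_{\zeta}H_{(\mathbf{y}_{0'}+\zeta\mathbf{y}_{1'},\mathbf{y}_{1'},t)}$, so that $\eta\left(H_{(\mathbf{y},t)}P_{\zeta}R\right)=P_{\zeta}\,H_{(\mathbf{y}_{0'}+\zeta\mathbf{y}_{1'},\mathbf{y}_{1'},t)}\,Q$. It then remains to absorb the $\mathbf{y}_{1'}$-part into $Q$. Using the group law $H_{(\mathbf{a},s)}H_{(\mathbf{b},u)}=H_{(\mathbf{a}+\mathbf{b},\,s+u-\langle\mathbf{b}_{0'},\mathbf{a}_{1'}\rangle+\langle\mathbf{a}_{0'},\mathbf{b}_{1'}\rangle)}$ established in the isomorphism lemma, I would factor
\[
H_{(\mathbf{y}_{0'}+\zeta\mathbf{y}_{1'},\mathbf{y}_{1'},t)}=H_{(\mathbf{y}_{0'}+\zeta\mathbf{y}_{1'},0,\,t-\langle\mathbf{y}_{0'}+\zeta\mathbf{y}_{1'},\mathbf{y}_{1'}\rangle)}\cdot H_{(0,\mathbf{y}_{1'},0)}.
\]
A one-line check of the central coordinate (with $\mathbf{a}_{1'}=0$, $\mathbf{b}_{0'}=0$, and symmetry of $\langle\cdot,\cdot\rangle$) confirms that the product restores the $t$-entry exactly. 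Since the first factor has vanishing second column, it equals $N_{\left(y_{00'}+\zeta y_{01'},\,y_{10'}+\zeta y_{11'},\,t-\langle\mathbf{y}_{0'}+\zeta\mathbf{y}_{1'},\mathbf{y}_{1'}\rangle\right)}$ by the definition $N_{(\mathbf{x},t)}=H_{(\mathbf{y},t)}$ with $\mathbf{y}=\left[\begin{smallmatrix}x_{1}&0\\ x_{2}&0\end{smallmatrix}\right]$.

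The only real obstacle is to justify that the leftover factor $H_{(0,\mathbf{y}_{1'},0)}$ genuinely lies in $Q$, so that it may be dropped modulo $Q$. I would verify this at the Lie algebra level rather than by matching zero patterns of the group: setting $y_{00'}=y_{10'}=t=0$ in (\ref{eq:Hisen}) shows that $\log H_{(0,\mathbf{y}_{1'},0)}\in\mathfrak{h}$ has nonzero entries only in positions $(3,2),(5,3),(5,6),(6,2)$, each of which is a starred slot of $\mathfrak{q}$; hence $\log H_{(0,\mathbf{y}_{1'},0)}\in\mathfrak{q}$ and therefore $H_{(0,\mathbf{y}_{1'},0)}=\exp(\cdot)\in Q$. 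The subtlety worth flagging is that the exponential develops a nonzero $(5,2)$-entry $-y_{01'}y_{11'}$, at a position where $\mathfrak{q}$ itself vanishes, so membership must be argued through $\mathfrak{q}$ and not by naively inspecting the zero pattern of the group element. Combining the three steps yields
\[
\eta\left(H_{(\mathbf{y},t)}P_{\zeta}R\right)=P_{\zeta}\cdot N_{\left(y_{00'}+\zeta y_{01'},\,y_{10'}+\zeta y_{11'},\,t-\langle\mathbf{y}_{0'}+\zeta\mathbf{y}_{1'},\mathbf{y}_{1'}\rangle\right)}\cdot Q,
\]
which is the asserted formula; reading off the three arguments of $N$ together with the parameter $\zeta$ of $P_{\zeta}$ reproduces precisely the four components of $\eta$ in (\ref{eq:psi}).
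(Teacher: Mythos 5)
Your proposal is correct and follows essentially the same route as the paper: conjugate $P_{\zeta}$ past $H_{(\mathbf{y},t)}$ via Lemma \ref{eq:P1}, factor $H_{(\mathbf{y}_{0'}+\zeta\mathbf{y}_{1'},\mathbf{y}_{1'},t)}=H_{(\mathbf{y}_{0'}+\zeta\mathbf{y}_{1'},\mathbf{0},\,t-\langle\mathbf{y}_{0'}+\zeta\mathbf{y}_{1'},\mathbf{y}_{1'}\rangle)}\cdot H_{(\mathbf{0},\mathbf{y}_{1'},0)}$ using the group law, and absorb $H_{(\mathbf{0},\mathbf{y}_{1'},0)}$ into $Q$. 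The only difference is that you explicitly justify $H_{(\mathbf{0},\mathbf{y}_{1'},0)}\in Q$ at the Lie-algebra level (a detail the paper asserts without comment), which is a welcome but minor elaboration rather than a different argument.
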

\begin{proof}
By (\ref{eq:P1}), we have
 \begin{equation*}
 \begin{split}&
   \eta\left(H_{(\mathbf{y},t)} \cdot P_{\zeta}R\right)
   = H_{(\mathbf{y}_{0'},\mathbf{y}_{1'},t)} \cdot P_{\zeta} Q
   =P_{\zeta}\cdot P_{\zeta}^{-1}\cdot H_{(\mathbf{y}_{0'},\mathbf{y}_{1'},t)} \cdot P_{\zeta} Q
   =P_{\zeta}\cdot H_{(\mathbf{y}_{0'}+\zeta\mathbf{y}_{1'},\mathbf{y}_{1'},t)} Q\\&
   =P_{\zeta}\cdot H_{(\mathbf{y}_{0'}+\zeta\mathbf{y}_{1'},\mathbf{0},t-\langle\mathbf{y}_{0'}+\zeta \mathbf{y}_{1'},\mathbf{y}_{1'}\rangle)}\cdot H_{(\mathbf{0},\mathbf{y}_{1'},0)}Q
   =P_{\zeta}\cdot H_{(\mathbf{y}_{0'}+\zeta\mathbf{y}_{1'},\mathbf{0},t-\langle\mathbf{y}_{0'}+\zeta \mathbf{y}_{1'},\mathbf{y}_{1'}\rangle)}Q\\&
    =P_{\zeta}\cdot N_{(y_{00'}+\zeta y_{01'},y_{10'}+\zeta y_{11'},t-\langle\mathbf{y}_{0'}+\zeta \mathbf{y}_{1'},\mathbf{y}_{1'}\rangle)}Q
  \end{split}
 \end{equation*}
 by $H_{(\mathbf{0},\mathbf{y}_{1'},0)}\in Q$. The result follows.\end{proof}

\end{document}